\theoremstyle{plain}
\newtheorem{lema}{Lemma}[section]
\newtheorem{prop}[lema]{Proposition}
\newtheorem{teo}[lema]{Theorem}
\newtheorem{coro}[lema]{Corollary}
\theoremstyle{remark}
\newtheorem{obs}[lema]{Remark}
\theoremstyle{definition}
\newtheorem{defi}[lema]{Definition}
\newtheorem{ej}[lema]{Example}
\newcommand{\Z}{\mathbb{Z}}
\newcommand{\N}{\mathbb{N}}
\newcommand{\R}{\mathbb{R}}
\newcommand{\T}{\mathbb{T}}
\newcommand{\PP}{\mathcal{P}}
\newcommand\restr{\raisebox{-0.3ex}{$|$}\raisebox{0.3ex}{}}
\newcommand{\tr}{\textrm{tr}}
\newcommand{\K}{\mathcal{K}}
\newcommand{\X}{\mathcal{X}}
\newcommand{\lee}{\le}
\newcommand{\susc}{\textrm{susc}}
\newcommand{\slsc}{\textrm{slsc}}
\def\spi#1{#1^{-1}}
\def\lpi#1{#1^{*-1}}
\def\opn#1{U_{#1}}
\def\cl#1{\textrm{cl}\, #1}
\begin{document}

\title[A Lefschetz fixed point theorem for multivalued maps of finite spaces]{A Lefschetz fixed point theorem for multivalued maps of finite spaces}

\author[J.A. Barmak]{Jonathan Ariel Barmak$^{\dagger}$}
\author[M. Mrozek]{Marian Mrozek$^{*}$}
\author[T. Wanner]{Thomas Wanner$^{\ddagger}$}

\thanks{$^{\dagger}$ Researcher of CONICET. Partially supported by grant UBACyT 20020160100081BA}
\thanks{$^*$ Partially supported by Polish NCN Ma\-estro Grant 2014/14/A/ST1/00453}
\thanks{$^{\ddagger}$ Partially supported by NSF grant DMS-1407087}

\address{Jonathan A. Barmak. Universidad de Buenos Aires. Facultad de Ciencias Exactas y Naturales. Departamento de Matem\'atica. CONICET-Universidad de Buenos Aires. Instituto de Investigaciones Matem\'aticas Luis A. Santal\'o (IMAS). Buenos Aires, Argentina. } \email{jbarmak@dm.uba.ar}

\address{Marian Mrozek. Division of Computational Mathematics, Faculty of Mathematics and Computer Science, Jagiellonian University, ul. St. Lojasiewicza 6, 30-348 Krak\'ow, Poland.} \email{Marian.Mrozek@uj.edu.pl}

\address{Thomas Wanner. Department of Mathematical Sciences, George Mason University, Fairfax, VA 22030, USA.} \email{twanner@gmu.edu}

\begin{abstract}
We prove a version of the Lefschetz fixed point theorem for multivalued maps $F:X\multimap X$ in which $X$ is a finite $T_0$ space.
\end{abstract}

\subjclass[2010]{55M20, 37B99, 54H20, 06A06}

\keywords{Finite $T_0$ spaces, multivalued maps, fixed points, acyclic carriers, posets.}

\maketitle

\section{Introduction}

For over a century dynamical systems have been used to study time-evolving phenomena in the applied sciences. Based on the fundamental assumption that the future evolution of the system is completely determined by its initial state, dynamical systems can be divided into two broad categories. Continuous-time dynamical systems model the evolution of an initial state for all times $t \in \R$ or $t \in \R_0^+$, while discrete-time dynamical systems are only interested in the discrete times $t \in \Z$ or $t \in \N_0$. Once one of these four time sets~$\T$ is chosen, and if the topological space~$X$ denotes the underlying state space of the system, then a dynamical system is a continuous map $\varphi : \T \times X \to X$ which satisfies
\begin{displaymath}
  \varphi(0,x) = x
  \quad\mbox{ and }\quad
  \varphi(t+s,x) = \varphi(t,\varphi(s,x))
  \quad\mbox{ for all }\quad
  t,s \in \T \; , \;\; x \in X \; .
\end{displaymath}
If $x \in X$ denotes an initial state of the system, then $\varphi(t,x)$ denotes the uniquely determined state of the system at time $t \in \T$.

While the concept of dynamical systems is fairly abstract, they can easily be generated in applications. On the one hand, if we consider the state space $X = \R^d$, then under mild regularity assumptions on a vector field $f : \R^d \to \R^d$ solutions of the autonomous differential equation $\dot{x} = f(x)$ give rise to a continuous-time dynamical system. On the other hand, if~$X$ is any topological space and $f : X \to X$ is continuous, then we obtain a discrete-time dynamical system by letting $\varphi(k,x) = f^k(x)$. In other words, discrete-time dynamical systems correspond to iterations of a fixed map. Notice that one can always choose the discrete time set $\T = \N_0$, but that the choice $\T = \Z$ requires~$f$ to be a homeomorphism.

The primary focus of the theory of dynamical system is to describe the behavior of its orbits. For any given initial state $x \in X$, the orbit through~$x$ is the image of the map~$\varphi(\cdot,x)$. Both in the case of ordinary differential equations and the iteration of maps, one usually cannot derive explicit formulas for the state~$\varphi(t,x)$ for arbitrarily large times~$t$, and therefore the focus has shifted towards the development of a qualitative theory. Building on the properties of simple orbits such as equilibria or periodic orbits, qualitative theory aims to assemble a global picture of the dynamics of~$\varphi$, and over the last century an impressive body of work has been accumulated toward this goal, based to a large part on topological methods. See for example~\cite{Rob} and the references therein.

Yet, is has become increasingly clear that for dynamical systems which arise in concrete situations the application of these abstract mathematical results often poses practical challenges. For example, even for simple high-dimensional ordinary differential equations of the form $\dot{x} = f(x)$ one usually cannot determine all of its equilibrium solutions explicitly, since they are given by the solutions of the nonlinear system $f(x) = 0$. With the advent of powerful computational techniques, numerical computations have increasingly been used to analyze the behavior of such dynamical systems, but this usually does not lead to mathematically rigorous results. See for example the discussion in~\cite{SanWan}.

In order to overcome the challenges of concrete applications while still retaining mathematical rigor, a number of researchers have started to employ computer-assisted proof techniques to study the global dynamics of a system. One promising approach is based on discretization. Since the state space is usually an infinite Hausdorff space such as~$X = \R^d$, one can introduce a coarser finite representation of the space as the union $X = G_1 \cup \ldots \cup G_N$ of certain subsets, which are usually based on a grid on~$X$. Induced by the underlying dynamical system~$\varphi$ one can then try to determine the collective behavior of all initial states in a set~$G_k$. Which sets can they move into? In the case of continuous-time dynamical systems, states originating in~$G_k$ can move to a subset of the neighboring cells, while in the case of a discrete-time dynamical system based on the map~$f : X \to X$, states from~$G_k$ can be mapped into the cells which intersect the image~$f(G_k)$. Moreover, even if the dynamical system is not known explicitly, one could still determine all images of the cell~$G_k$ through approximations which include computable error bounds and lead to potentially larger target sets. In either case, this discretization process will lead to a multivalued map, which in some sense approximates the dynamical behavior. Despite the large loss of information inherent in this discretization, one can often still use topological results from degree theory~\cite{Zei} or the more dynamically oriented Conley index theory~\cite{Con} to transfer statements about the finite multivalued map back to mathematical results about the underlying dynamical system. For example, in~\cite{MisMro} this approach was used to prove chaos in the Lorenz equations.

While the above-described procedure naturally leads to the study of multivalued maps on a topological space, such maps have been studied extensively before in a number of other contexts, such as for example control theory and differential inclusions~\cite{AubCel}. These applications have shown that one needs to be careful with the formulation of smoothness assumptions for multivalued maps, as there is no canonical notion of continuity. In order to keep the theory as general as possible, two notions of semicontinuity have been introduced, and these lead to a theory of multivalued maps which in many aspects parallels the treatment of their single-valued counterparts. For example, topological methods to establish the existence of fixed points are developed in~\cite{Gor} under the assumption of semicontinuity. The majority of these results, however, has only been developed for multivalued maps on Hausdorff spaces.

To the best of our knowledge, there is no comprehensive theory aimed at the dynamics of multivalued maps between finite topological spaces --- which are Hausdorff only under the trivial discrete topology. In addition to the computer-assisted proof approach outlined earlier,  multivalued maps on finite topological spaces constitute a natural tool to study sampled dynamical systems, that is systems known only from a finite sample, for example a time series \cite{ABMS2015,DJKKLM2018}. Another example is Forman's theory of combinatorial vector fields~\cite{For1, For2}. The theory introduces an associated flow on the underlying simplicial complex which can be viewed as a multivalued map. The dynamics of this flow map has been studied in a recent series of papers~\cite{BKMW, KMW, Mro}. More precisely, the authors introduce combinatorial counterparts to the dynamical concepts of isolated invariant sets, Morse decompositions, and Conley index in the setting of simplicial complexes and Lefschetz complexes. In addition, they establish connections between the combinatorial theory and its classical versions. The generalization of this work to general finite topological spaces, however, remains open.

As a first step towards such a theory, the present paper is devoted to deriving a Lefschetz fixed point theorem for multivalued maps $F:X\multimap X$ where $X$ is a finite $T_0$ space. Recall that there is an isomorphism between the category of finite $T_0$ spaces with continuous maps and the category of finite posets with order-preserving maps. Multivalued maps between posets were previously investigated in \cite{RS, Smi, Smi2, Wal2}. Most of these results were aimed at studying fixed points in the case of infinite posets. In particular in \cite{RS} and \cite{Wal2}, multivalued maps are used to study the problem of the fixed point property of a product of posets. The maps considered in these articles are upper semicontinuous (usc) and/or lower semicontinuous (lsc), and \cite{Wal2} used the notion of isotone maps for maps which are both usc and lsc. In the classical fixed point theory of multivalued mappings, fixed points of usc multivalued maps with acyclic values $F:X\multimap X$ between absolute neighborhood retracts are studied using a variant of the Lefschetz fixed point theorem \cite[Theorem~32.9]{Gor}. We will see that there is no hope to define Lefschetz numbers for multivalued maps $F:X\multimap X$ between finite $T_0$ spaces which are isotone and have acyclic values if we want each continuous selector $f:X\to X$ of $F$ to have the same Lefschetz number as $F$ (see Proposition \ref{noway}). Rather, we concentrate on multivalued maps which satisfy stronger regularity properties than upper or lower semicontinuity, which will be called strong upper and strong lower semicontinuity. For such multivalued maps it will be possible to define an induced map in homology, and then to establish the Lefschetz fixed point theorem.

The remainder of this paper is organized as follows. After a brief review of finite spaces in Section~\ref{secfs}, we recall the notions of upper and lower semicontinuity for multivalued maps and introduce stronger versions of these concepts in Section~\ref{secsc}. In addition, we provide a number of equivalent characterizations for these definitions in the context of finite $T_0$ spaces, which will be useful later on. In Section~\ref{sechomhom} we show that multivalued maps which are strongly usc or strongly lsc and which have acyclic values induce well-defined maps in homology, and this is used to establish the Lefschetz fixed point theorem in Section~\ref{secfixedpointthm}. Section~\ref{secfpp} discusses the fixed point property for finite $T_0$ spaces with respect to these multivalued maps, while Section~\ref{secisotone} shows that it is not possible to define a Lefschetz number for isotone maps. Finally, in Section~\ref{sechomotopy} we present a notion of homotopy which preserves Lefschetz numbers.

\section{Basics on finite spaces} \label{secfs}

We begin by recalling the basic correspondence between finite spaces and posets due to Alexandroff and some elementary results of the homotopy theory of finite spaces originally developed by McCord and Stong. A finite topological space is a space with finitely many points. Many of the results of this paper can be stated for arbitrary finite spaces but we will restrict ourselves to finite $T_0$ spaces, that is finite spaces in which for any two different points there exists an open set containing only one of them. If~$X$ is a finite $T_0$ space, then for every point $x \in X$ there exists a {\em smallest open set\/}~$\opn{x}$ which contains~$x$. If we then define
\begin{displaymath}
  x \le y
  \quad \textrm{if} \quad
  x \in \opn{y} \; ,
\end{displaymath}
then~$X$ becomes a poset with respect to the so-defined order. One can easily show that we also have
\begin{displaymath}
  x \ge y
  \quad\Leftrightarrow\quad
  x \in \cl{y} \; ,
\end{displaymath}
where as usual~$\cl{y}$ denotes the closure of the set~$\{ y \}$.
Conversely, if~$X$ denotes any finite poset, then one obtains a finite $T_0$ space by considering all down-sets in the poset as open. Recall that $U\subset X$ is called a down-set if $x\in U$ and $y\le x$ implies $y\in U$. The closed sets of this space are then the up-sets.\footnote{We would like to point out that while this specific correspondence seems to be the most commonly used one, it would also be possible to define the order relation by replacing $\opn{y}$ by $\cl{y}$. This was in fact done in the paper \cite{Ale} which first established the connection between posets and finite $T_0$ spaces. In this convention, down-sets in the poset correspond to closed sets, and up-sets to open sets. All of the results in the present paper remain valid under this convention, if one reverses all poset inequalities.} This establishes a correspondence between finite $T_0$ spaces and finite posets and from now on we will use this correspondence to treat finite $T_0$ spaces and finite posets as the same object.

If $A$ is any subset of a finite $T_0$ space $X$, we denote by $U_A$ the smallest open set containing $A$. This is exactly the set of points in $X$ which are smaller than or equal to some point in $A$.

For finite $T_0$ spaces $X$ and $Y$ the product topology on $X\times Y$ corresponds to the product order given by $(x,y)\le (x',y')$ if $x\le x'$ and $y\le y'$.

It is easy to prove that a single-valued function $f:X\to Y$ between finite $T_0$ spaces is continuous if and only if it is order-preserving, that is $x\le x'$ implies $f(x)\le f(x')$. Given a finite $T_0$ space $X$, we denote by $X^{op}$ the finite space with the dual (opposite) order. Therefore a map $f:X\to Y$ between finite $T_0$ spaces is continuous if and only if the map $f^{op}:X^{op}\to Y^{op}$ which coincides with $f$ in the underlying sets is continuous.

If $X$ is a finite $T_0$ space, two points $x,y\in X$ lie in the same path-component if and only if there exists a sequence $x=x_0\le x_1 \ge x_2\le \ldots x_n=y$. Such a sequence is called a fence from $x$ to $y$. Given finite $T_0$ spaces $X$ and $Y$, the set $Y^X$ of continuous maps from $X$ to $Y$ has a natural order, the \textit{pointwise order}, given by $f\le g$ if $f(x)\le g(x)$ for every $x\in X$.
If $f\le g$ then the map $H:X\times [0,1]\to Y$ defined by $H(x,0)=g(x)$ and $H(x,t)=f(x)$ for $t>0$ defines a homotopy from $g$ to $f$. In particular, a finite $T_0$ space with a maximum is contractible. The topology which corresponds to the pointwise order is the compact-open topology. This fact was used by Stong \cite{Sto} to give a characterization of homotopies between maps of finite spaces: two maps $f,g:X\to Y$ are homotopic if and only if there is a sequence $f=f_0\le f_1\ge f_2\le \ldots f_n=g$ of continuous maps from $X$ to $Y$.

In general finite spaces are not homotopy equivalent to Hausdorff spaces. However, McCord proved that any finite space is weak homotopy equivalent to a polyhedron. Recall that a map $f:X\to Y$ between arbitrary topological spaces is said to be a weak homotopy equivalence if it induces isomorphisms in all homotopy groups for any base point. One such map induces automatically isomorphisms in all the homology groups for any coefficient group.

\begin{teo} (McCord's Theorem \cite[Theorem 6]{Mcc}) \label{mccord}
Suppose $X$ is any space and $Y$ is a finite $T_0$ space. Let $f:X\to Y$ be a continuous map such that $f^{-1}(U_y)$ is weakly contractible (i.e., it has trivial homotopy groups) for every $y\in Y$. Then $f$ is a weak homotopy equivalence.
\end{teo}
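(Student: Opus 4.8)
The plan is to deduce the result from the standard \emph{basis-like open cover} criterion for weak homotopy equivalences, applied to the canonical cover of $Y$ by minimal open sets. Recall that a collection $\mathcal{U}$ of open subsets of a space $Z$ is basis-like if it covers $Z$ and whenever $z\in U_1\cap U_2$ with $U_1,U_2\in\mathcal{U}$, there is some $U_3\in\mathcal{U}$ with $z\in U_3\subseteq U_1\cap U_2$. The key technical tool, which is the heart of McCord's argument, is the following: if $f:X\to Y$ is continuous, $\mathcal{U}$ is a basis-like cover of $Y$, the preimage collection $f^{-1}(\mathcal{U})=\{f^{-1}(U):U\in\mathcal{U}\}$ is a basis-like cover of $X$, and the restriction $f|:f^{-1}(U)\to U$ is a weak homotopy equivalence for every $U\in\mathcal{U}$, then $f$ itself is a weak homotopy equivalence.

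First I would take $\mathcal{U}=\{U_y:y\in Y\}$, the cover of $Y$ by minimal open sets. This is basis-like: if $z\in U_y\cap U_{y'}$, then $U_y\cap U_{y'}$ is an open set containing $z$, so by minimality $U_z\subseteq U_y\cap U_{y'}$, and of course $z\in U_z$. Next I would verify that $f^{-1}(\mathcal{U})$ is a basis-like cover of $X$. Since $f$ is continuous and the $U_y$ cover $Y$, the preimages are open and cover $X$. For the basis-like property, note $f^{-1}(U_y)\cap f^{-1}(U_{y'})=f^{-1}(U_y\cap U_{y'})$; given a point $x$ in this intersection we have $f(x)\in U_y\cap U_{y'}$, hence $U_{f(x)}\subseteq U_y\cap U_{y'}$ by the previous step, so $x\in f^{-1}(U_{f(x)})\subseteq f^{-1}(U_y)\cap f^{-1}(U_{y'})$, as required.

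It then remains to check that each restriction $f|:f^{-1}(U_y)\to U_y$ is a weak homotopy equivalence. The target $U_y=\{x:x\le y\}$ has maximum $y$, so by the observation in Section~\ref{secfs} that a finite $T_0$ space with a maximum is contractible, $U_y$ is contractible and in particular weakly contractible. The source $f^{-1}(U_y)$ is weakly contractible by hypothesis. A continuous map between two weakly contractible spaces induces isomorphisms on all homotopy groups for every base point, both groups being trivial, so $f|$ is a weak homotopy equivalence. Applying the basis-like cover lemma to $f$, $\mathcal{U}$, and $f^{-1}(\mathcal{U})$ then yields the conclusion.

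The main obstacle is the basis-like cover lemma itself, which carries the genuine content; everything else above is routine bookkeeping. Its proof cannot be obtained by naive glueing, since weak homotopy equivalences are not local in an elementary sense. Instead one compares $X$ and $Y$ to the homotopy colimits (equivalently, the nerves with their natural filtrations) of the covers $f^{-1}(\mathcal{U})$ and $\mathcal{U}$, or invokes a Dold-type theorem on maps that are local weak equivalences over a numerable cover. The basis-like hypothesis is precisely what guarantees that singular simplices and the homotopies between them can be subdivided so that each piece lands in a single cover element, which is what allows a Mayer--Vietoris or spectral-sequence comparison to run and transfer the local weak equivalences to a global one.
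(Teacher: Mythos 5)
The paper gives no proof of this statement at all: it is quoted directly as Theorem~6 of McCord's paper \cite{Mcc}, i.e.\ as the specialization of McCord's basis-like open cover criterion to the cover of $Y$ by minimal open sets. Your proposal is exactly that standard reduction, and every step of it is correct: $\{U_y\}_{y\in Y}$ is basis-like by minimality of $U_z$, the preimage cover is basis-like (a hypothesis McCord's theorem does not even require, but harmless), and each restriction $f^{-1}(U_y)\to U_y$ is a map between weakly contractible spaces (the target because $U_y$ has the maximum $y$) and hence a weak homotopy equivalence. Since the black box you invoke is literally the result the paper cites, you are taking essentially the same route as the paper.

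One warning about your closing sketch of how the basis-like cover lemma itself would be proved: the suggestion to invoke a Dold-type theorem for maps that are local weak equivalences over a \emph{numerable} cover cannot work in this setting. On a connected finite $T_0$ space every continuous real-valued function is constant (if $x\ge y$ then $x\in \overline{\{y\}}$, so $f(x)\in\overline{\{f(y)\}}=\{f(y)\}$ in the $T_1$ space $\R$), so nontrivial partitions of unity do not exist and the cover $\{U_y\}$ is essentially never numerable. This is precisely why McCord proves his Theorem~6 for arbitrary basis-like open covers, with no numerability assumption; a homotopy-colimit comparison can be pushed through for arbitrary open covers, but then the basis-like condition is what handles the intersections $U_{y_1}\cap U_{y_2}$, over which the hypotheses say nothing directly. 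Since you treat the lemma as citable --- and it is the cited result --- this does not invalidate your reduction, but it would become a genuine gap if you tried to discharge the lemma along the numerable-cover route.
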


In the case that $X$ is finite as well this result can be deduced from Quillen's Theorem A \cite{Qui2}. If $X$ is a finite $T_0$ space, we denote by $\K (X)$ the order complex of $X$, that is, the simplicial complex consisting of all chains in the poset. McCord used the result above to prove that for any finite $T_0$ space there exists a weak homotopy equivalence $\mu_X:\K(X) \to X$ \cite[Theorem 1]{Mcc}. Moreover, any continuous map $f:X\to Y$ between finite~$T_0$ spaces induces a simplicial map $\K(f):\K(X) \to \K(Y)$ which coincides with~$f$ on vertices, and we have $f \circ \mu_X = \mu_Y \circ \K(f)$. Note that since $\K(X)=\K(X^{op})$, $X$ and $X^{op}$ have isomorphic homology groups with an isomorphism given by $(\mu_{X^{op}})_*(\mu_X)_*^{-1}:H_n(X)\to H_n(X^{op})$.

In the classical setting of compact spaces the construction of homology of multivalued maps  is  based on Vietoris-Begle mapping theorem \cite{Vie}.
In the setting of finite topological spaces this theorem may be replaced by
the following version of McCord's result for homology. Recall that we call a space acyclic if all its reduced homology groups with integer coefficients are trivial.


\begin{teo} [McCord's Theorem for homology] \label{quillenhomology}
Let $X$ be an arbitrary space and let $Y$ be a finite $T_0$ space. Let $f:X\to Y$ be a continuous map such that $f^{-1}(U_y)$ is acyclic for every $y\in Y$. Then $f$ induces isomorphisms $f_*:H_n(X)\to H_n(Y)$ in all the homology groups with integer coefficients.
\end{teo}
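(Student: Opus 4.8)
The plan is to prove this by induction on the number of points of $Y$, running a Mayer--Vietoris comparison together with the five lemma; this is the finite-space incarnation of the Vietoris--Begle theorem. It should be stressed at the outset that one cannot simply reduce to Theorem~\ref{mccord}, since acyclicity is strictly weaker than weak contractibility, so a genuinely homological argument is needed. For the base case $|Y|=1$ the space $Y$ is a point, $\opn{y}=Y$, and the hypothesis says that $X=f^{-1}(Y)$ is acyclic; hence $f_*$ is an isomorphism in every degree.

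For the inductive step I would choose a maximal element $y_0$ of the poset $Y$. Then $\{y_0\}$ is closed (its closure $\cl{y_0}$ is the up-set of $y_0$, which is $\{y_0\}$), so $A:=Y\setminus\{y_0\}$ is open, while $B:=\opn{y_0}$ is open and contractible, being a finite $T_0$ space with maximum $y_0$. Since $y_0\in B$ we have $Y=A\cup B$, and applying $f^{-1}$ yields the open cover $X=f^{-1}(A)\cup f^{-1}(B)$; thus Mayer--Vietoris applies to both covers and $f$ induces a morphism between the two long exact sequences. Three comparison maps enter: on $B$ the restriction $f^{-1}(B)\to B$ is a homology isomorphism directly, because both $\opn{y_0}$ and its acyclic preimage $f^{-1}(\opn{y_0})$ are acyclic; on $A$ and on $A\cap B=\opn{y_0}\setminus\{y_0\}$, which both have strictly fewer points than $Y$, I would invoke the inductive hypothesis.

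The key verification, which I expect to be the main obstacle, is that the acyclicity hypothesis descends unchanged to the subspaces $A$ and $A\cap B$. The decisive point is that for any $y\neq y_0$ one has $y_0\not\le y$ by maximality, so $y_0\notin\opn{y}$ and hence $\opn{y}\subseteq A$; moreover $\opn{y}\subseteq\opn{y_0}\setminus\{y_0\}=A\cap B$ whenever $y<y_0$. Consequently the smallest open set of $y$ computed inside $A$ (respectively inside $A\cap B$) is again $\opn{y}$, and its preimage is the acyclic set $f^{-1}(\opn{y})$, so both restrictions satisfy the hypotheses of the theorem on a smaller space. The inductive hypothesis then makes $f^{-1}(A)\to A$ and $f^{-1}(A\cap B)\to A\cap B$ homology isomorphisms, and naturality of Mayer--Vietoris plus the five lemma force $f_*:H_n(X)\to H_n(Y)$ to be an isomorphism for all $n$. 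One should also note the harmless degenerate case in which $y_0$ is isolated, so $\opn{y_0}=\{y_0\}$ and $A\cap B=\emptyset$; there the sequence splits and $X=f^{-1}(A)\sqcup f^{-1}(B)$, which is handled identically. (If one prefers to avoid the explicit induction, the same effect is obtained by running the Mayer--Vietoris spectral sequence of the basic cover $\{\opn{y}\}_{y\in Y}$ on both sides: acyclicity of the preimages collapses the comparison to the nerve level, which is common to $X$ and $Y$.)
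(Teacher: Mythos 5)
Your proof is correct and follows essentially the same route as the paper's: induction on $|Y|$, removing a maximal point $y_0$ to get the open cover $Y=(Y\setminus\{y_0\})\cup U_{y_0}$, applying the inductive hypothesis to $Y\setminus\{y_0\}$ and $U_{y_0}\setminus\{y_0\}$, using contractibility of $U_{y_0}$ together with the acyclicity hypothesis on its preimage, and concluding by naturality of Mayer--Vietoris and the five lemma. Your explicit verification that the hypothesis descends to the subspaces (i.e.\ that $\opn{y}$ computed inside $A$ or $A\cap B$ agrees with $\opn{y}$ in $Y$) is a point the paper leaves implicit, and is a welcome addition.
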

\begin{proof}
We proceed by induction on the cardinality of $Y$. The result is true when $Y$ is empty. For $Y$ non-empty let $y\in Y$ be a maximal point. Then $f\restr_{f^{-1}(Y\smallsetminus \{y\})}: f^{-1}(Y\smallsetminus \{y\}) \to Y\smallsetminus \{y\}$ and $f\restr_{f^{-1}(U_y\smallsetminus \{y\})}: f^{-1}(U_y\smallsetminus \{y\})\to U_y\smallsetminus \{y\}$ induce isomorphisms in homology by induction. Since $U_y$ has a maximum, it is contractible so by hypothesis $f\restr _{f^{-1}(U_y)}:f^{-1}(U_y)\to U_y$ induces isomorphisms in homology as well. A Mayer-Vietoris argument together with the five lemma shows that $f:X\to Y$ induces isomorphisms in homology.
\end{proof}

Other versions of this result have been studied in \cite{Bar, BWW, Qui, Wal}.


\section{Semicontinuity of multivalued maps}
\label{secsc}

We begin by recalling two fundamental regularity assumptions which can be imposed on multivalued maps. For this, let~$X$ and~$Y$ denote topological spaces, and let $F:X\multimap Y$ be an arbitrary multivalued map, that is, a map which associates a subset $F(x)\subset Y$ with every $x\in X$. In view of the standard definition of continuity for single-valued maps, one would like to have a notion of continuity which is based on the condition that inverse images of open sets are again open. While in the single-valued map case this leads to a well-defined notion, the meaning of inverse image in the case of multivalued maps is not immediately clear. In fact, one could use either the definition
\begin{displaymath}
  \spi{F}(B) = \left\{ x \in X \; : \;
    F(x) \subset B \right\}
  \quad\mbox{ or }\quad
  \lpi{F}(B) = \left\{ x \in X \; : \;
    F(x) \cap B \neq \emptyset \right\}
\end{displaymath}
for all subsets $B \subset Y$. We refer to these two definitions as the {\em small preimage\/} and the {\em large preimage\/} of~$B$ under~$F$, respectively, since clearly the first is contained in the second for multivalued maps with nonempty values. Depending on which notion is used, one then obtains the following two continuity concepts.

\begin{defi}[Semicontinuity]
Let $X$ and $Y$ denote two topological spaces, and let $F:X\multimap Y$ denote an arbitrary multivalued map between them. Then we say that~$F$ is {\em upper semicontinuous (usc)\/} if for every open set $B \subset Y$ the small preimage $\spi{F}(B)$ is open in~$X$. The multivalued map~$F$ is called {\em lower semicontinuous (lsc)\/} if for every open set $B \subset Y$ the large preimage $\lpi{F}(B)$ is open in~$X$.
\end{defi}

Notice that any continuous single-valued map $f : X \to Y$ can be viewed as a multivalued map $x\mapsto \{f(x)\}$, and this induced map is always usc and lsc. Thus, both of the above definitions are natural generalizations of the continuity concept to multivalued maps, but one can easily see that they are satisfied by different classes of maps. As was shown in~\cite{Gor}, depending on the specific application one or the other concept might be more appropriate. Notice also that for usc and lsc the closedness properties of preimages of closed sets are more delicate. Since one can easily show that
\begin{displaymath}
  X \setminus \lpi{F}(B) = \spi{F}( Y \setminus B )
  \quad\mbox{ for all }\quad
  B \subset Y \; ,
\end{displaymath}
we have the characterizations
\begin{eqnarray*}
  F \;\mbox{ is usc } & \Leftrightarrow &
    \lpi{F}(C) \;\mbox{ is closed for all closed sets }\;
    C \subset Y \; , \\
  F \;\mbox{ is lsc } & \Leftrightarrow &
    \spi{F}(C) \;\mbox{ is closed for all closed sets }\;
    C \subset Y \; ,
\end{eqnarray*}
in which the large and small preimages are switched.

\begin{lema}[Semicontinuity in finite $T_0$ spaces]
\label{lem:scft0s}
Let $X$ and $Y$ denote two finite $T_0$ spaces, and let $F:X\multimap Y$ denote an arbitrary multivalued map between them. Then the following four statements are pairwise equivalent.
\begin{itemize}
\item[($u_a$)] The map~$F$ is upper semicontinuous.
\item[($u_b$)] For every $x \in X$ we have $F(\opn{x}) \subset \opn{F(x)}$.
\item[($u_c$)] For all $x_1, x_2 \in X$ with $x_1 \le x_2$ we have $F(x_1) \subset \opn{F(x_2)}$.
\item[($u_d$)] For all $x_1, x_2 \in X$ with $x_1 \le x_2$ and for all $y_1 \in F(x_1)$ there exists a $y_2 \in F(x_2)$ such that $y_1 \le y_2$.
\end{itemize}
For the concept of lower semicontinuity, we obtain the following four pairwise equivalent statements.
\begin{itemize}
\item[($\ell_a$)] The map~$F$ is lower semicontinuous.
\item[($\ell_b$)] For every $x \in X$ we have $F(\cl{x}) \subset \cl{F(x)}$.
\item[($\ell_c$)] For all $x_1, x_2 \in X$ with $x_1 \ge x_2$ we have $F(x_1) \subset \cl{F(x_2)}$.
\item[($\ell_d$)] For all $x_1, x_2 \in X$ with $x_1 \ge x_2$ and for all $y_1 \in F(x_1)$ there exists a $y_2 \in F(x_2)$ such that $y_1 \ge y_2$.
\end{itemize}
\end{lema}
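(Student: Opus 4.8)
The plan is to prove the four upper-semicontinuity conditions pairwise equivalent by establishing a cycle of implications, say $(u_a)\Rightarrow(u_b)\Rightarrow(u_c)\Rightarrow(u_d)\Rightarrow(u_a)$, and then to obtain the lower-semicontinuity equivalences for free by a duality argument, passing from $X,Y$ to $X^{op},Y^{op}$. The recurring technical fact I would use throughout is the description of the smallest open set given in Section~\ref{secfs}: for a subset $A$ of a finite $T_0$ space, $U_A=\{z : z\le a \text{ for some } a\in A\}$, and in particular $\opn{x}=\{z:z\le x\}$. Thus $F(x_1)\subset\opn{F(x_2)}$ simply means every element of $F(x_1)$ lies below some element of $F(x_2)$, which is exactly condition $(u_d)$; so the equivalence $(u_c)\Leftrightarrow(u_d)$ is essentially a translation between set-containment and the pointwise order, requiring no real work.

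For the remaining implications I would argue as follows. To see $(u_b)\Rightarrow(u_c)$, note that if $x_1\le x_2$ then $x_1\in\opn{x_2}$, so $F(x_1)\subset F(\opn{x_2})\subset\opn{F(x_2)}$, using that $F$ applied to a set is the union of its values. The reverse-flavored implication $(u_c)\Rightarrow(u_b)$ is similar: $\opn{x}=\{z:z\le x\}$, so every $z\in\opn{x}$ satisfies $F(z)\subset\opn{F(x)}$ by $(u_c)$, hence $F(\opn{x})=\bigcup_{z\le x}F(z)\subset\opn{F(x)}$; this shows $(u_b)$ and $(u_c)$ are in fact directly equivalent, which I would fold into the cycle. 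The genuinely topological step is the equivalence of $(u_a)$ with these order-theoretic conditions. Here I would use the characterization of usc in finite spaces directly: $F$ is usc iff $\spi{F}(B)$ is open for every open $B\subset Y$, and because every open set is a union of basic open sets $\opn{y}$ and small preimages respect such decompositions, it suffices to test on the sets $\opn{y}$. Then $\spi{F}(\opn{y})$ being a down-set for all $y$ is precisely the condition that $x_1\le x_2$ and $F(x_2)\subset\opn{y}$ force $F(x_1)\subset\opn{y}$; unwinding this against $(u_c)$ and the fact that $\opn{F(x_2)}$ is itself a union of sets $\opn{y}$ gives the equivalence.

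The main obstacle I anticipate is the careful handling of the usc condition, condition $(u_a)$, in the two directions connecting it to the combinatorial statements, since one must be attentive to the asymmetry of the small preimage: unlike the large preimage, $\spi{F}$ does not commute with unions, so reducing the open-set test to basic open sets $\opn{y}$ needs a short but genuine argument rather than a formal manipulation. Concretely, one checks that $\spi{F}(\bigcup_i B_i)\supset\bigcup_i\spi{F}(B_i)$ always, but equality can fail; the resolution is that openness of $\spi{F}(B)$ for all open $B$ is equivalent to $\spi{F}(B)$ being a down-set, and down-sets are detected pointwise, so one only needs that $x\in\spi{F}(B)$ and $x'\le x$ imply $x'\in\spi{F}(B)$ — a condition that collapses neatly onto $(u_c)$. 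Finally, for the lower-semicontinuity half of the lemma I would simply observe that $F:X\multimap Y$ is lsc precisely when the same map viewed as $F:X^{op}\multimap Y^{op}$ is usc, that closures in $X$ are smallest open sets in $X^{op}$, and that the order reverses; applying the already-proved upper equivalences $(u_a)$--$(u_d)$ to $X^{op}$ and $Y^{op}$ then yields $(\ell_a)$--$(\ell_d)$ verbatim, so no separate argument is needed.
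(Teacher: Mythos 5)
Your overall architecture matches the paper's: a cycle of implications for the usc case, with the easy steps $(u_b)\Rightarrow(u_c)\Rightarrow(u_d)$ handled by the same observations (namely that $\opn{F(x_2)}$ is the set of points below some element of $F(x_2)$), and the lsc case obtained by the duality $F:X\multimap Y$ is lsc iff $F^{op}:X^{op}\multimap Y^{op}$ is usc, which is exactly the symmetry the paper invokes when it says the lower semicontinuous case is analogous.

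There is, however, a genuinely false claim in your treatment of the link between $(u_a)$ and the combinatorial conditions, and you misdiagnose it as something fixable: you assert that since every open set is a union of basic open sets $\opn{y}$, ``it suffices to test on the sets $\opn{y}$,'' and later that this reduction merely ``needs a short but genuine argument.'' No argument can supply it, because the reduction is false. Take $X=\{x_1<x_2\}$ and let $Y=\{y_1,y_2,y_3\}$ be an antichain, so that $\opn{y_i}=\{y_i\}$, and set $F(x_1)=\{y_1,y_3\}$, $F(x_2)=\{y_1,y_2\}$. Then $\spi{F}(\opn{y_i})=\emptyset$ is open for every $i$, so the basic-open-set test is passed vacuously; yet $F$ is not usc, since $B=\{y_1,y_2\}$ is open, $x_2\in\spi{F}(B)$, but $x_1\notin\spi{F}(B)$ (and indeed $(u_c)$ fails). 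So the condition ``$\spi{F}(\opn{y})$ is open for all $y\in Y$'' is strictly weaker than upper semicontinuity, precisely because $\spi{F}$ does not commute with unions. What actually carries the proof is your fallback in the ``obstacle'' paragraph, which abandons basic open sets altogether: for arbitrary open $B$, openness of $\spi{F}(B)$ means it is a down-set, checked pointwise; the direction $(u_c)\Rightarrow(u_a)$ then follows because an open $B\supset F(x_2)$ is a down-set and hence contains $\opn{F(x_2)}$, while the direction $(u_a)\Rightarrow(u_c)$ requires the crucial instantiation $B=\opn{F(x_2)}$, the smallest open set containing $F(x_2)$ --- this is exactly the paper's step $(u_a)\Rightarrow(u_b)$. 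Your proof is correct once the basic-open-set reduction is deleted and replaced by, rather than ``resolved into,'' that argument; as written, the proposal still asserts an equivalence that fails.
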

\begin{proof}
We only prove the equivalences for the case of upper semicontinuity, since lower semicontinuity can be treated analogously. See also the discussion at the end of this section.

($u_a$) $\Rightarrow$ ($u_b$): The set $\opn{F(x)}$ is open by definition, so $F^{-1}(\opn{F(x)})\subset X$ is open. Since $x\in F^{-1}(\opn{F(x)})$, the smallest open set containing $x$, $U_x$, is contained in $F^{-1}(\opn{F(x)})$.

($u_b$) $\Rightarrow$ ($u_c$): Due to $x_1 \le x_2$ we have $x_1 \in \opn{x_2}$. This gives $F(x_1) \subset F(\opn{x_2}) \subset \opn{F(x_2)}$, according to~($u_b$).

($u_c$) $\Rightarrow$ ($u_d$): This follows from the fact that $\opn{F(x_2)}\subset Y$ is the set of elements $y\in Y$ which are smaller than or equal to some element in $F(x_2)$.

($u_d$) $\Rightarrow$ ($u_a$): Suppose that $B \subset Y$ is open. We need to show that~$\spi{F}(B) \subset X$ is open or, equivalently, a down-set. For this, let $x_2 \in \spi{F}(B)$ be arbitrary, which implies $F(x_2) \subset B$, and consider a point $x_1 \le x_2$. According to~($u_d$), for every $y_1 \in F(x_1)$ there exists $y_2 \in F(x_2) \subset B$ such that $y_1 \le y_2$. Since $B$ is a down-set, $y_1\in B$. This proves that $F(x_1)\subset B$, so $x_1 \in \spi{F}(B)$. Thus, $\spi{F}(B)$ is a down-set.
\end{proof}

These characterizations equip us with a variety of ways for establishing upper or lower semicontinuity in the case of finite $T_0$ spaces. In fact, in the above-mentioned work~\cite{RS} upper and lower semicontinuity were defined via properties~($u_d$) and~($\ell_d$), respectively.

Lemma~\ref{lem:scft0s} illustrates in a remarkable way the inherent symmetry between the concepts of upper and lower semicontinuity in finite $T_0$ spaces. It also shows that these concepts explicitly depend on the topologies of both spaces~$X$ and~$Y$. This is no longer the case for the following two concepts, which are of central importance for the present paper.

\begin{defi}[Strong Semicontinuity]
\label{def:ssc}
Let $X$ be a topological space, let $Y$ be a set, and let $F:X\multimap Y$ denote an arbitrary multivalued map between them. Then~$F$ is {\em strongly upper semicontinuous ($\susc$)\/} if for every subset $B \subset Y$ the small preimage $\spi{F}(B)$ is open in~$X$. The multivalued map~$F$ is called {\em strongly lower semicontinuous ($\slsc$)\/} if for every subset $B \subset Y$ the large preimage $\lpi{F}(B)$ is open in~$X$.
\end{defi}

The above definition immediately shows that
\begin{eqnarray*}
  \mbox{$F$ is $\susc$} & \Leftrightarrow &
    \mbox{$F$ is usc with respect to the discrete topology on~$Y$} \\
  & \Leftrightarrow &
    \mbox{$F$ is usc with respect to {\em any\/} topology on~$Y$}
    \; ,
\end{eqnarray*}
and these equivalences remain valid if $\susc$ and usc are replaced by $\slsc$ and lsc, respectively. For the case of $X$ being a finite $T_0$ space, strong semicontinuity has a convenient characterization through a set-theoretic monotonicity condition.

\begin{lema}[Combinatorial characterization of strong semicontinuity]
\label{lem:sscft0s}
Let $X$ be a finite $T_0$ space, let $Y$ be any set, and let $F:X\multimap Y$ denote an arbitrary multivalued map between them. Then we have:
\begin{eqnarray*}
  \mbox{$F$ is $\susc$} & \Leftrightarrow &
    \mbox{for all }\;
    x_1, x_2 \in X
    \;\mbox{ with }\;
    x_1 \le x_2
    \;\mbox{ we have }\;
    F(x_1) \subset F(x_2) \; , \\
  \mbox{$F$ is $\slsc$} & \Leftrightarrow &
    \mbox{for all }\;
    x_1, x_2 \in X
    \;\mbox{ with }\;
    x_1 \le x_2
    \;\mbox{ we have }\;
    F(x_1) \supset F(x_2) \; .
\end{eqnarray*}
\end{lema}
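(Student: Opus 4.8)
The plan is to prove the equivalence directly by unwinding the definition of strong upper semicontinuity in terms of small preimages and translating it into the monotonicity condition on values. Recall that by Definition~\ref{def:ssc}, the map $F$ is $\susc$ precisely when $\spi{F}(B)$ is open (equivalently, a down-set) in $X$ for \emph{every} subset $B\subset Y$; the crucial feature here, compared to ordinary usc, is that $B$ ranges over all subsets and not merely the open ones, since we impose the discrete topology on $Y$.

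For the forward implication, suppose $F$ is $\susc$ and take $x_1\le x_2$ in $X$. I would apply the openness of the small preimage to the specific subset $B=F(x_2)\subset Y$. Since $F(x_2)\subset F(x_2)$, we have $x_2\in\spi{F}(F(x_2))$, and because this set is a down-set and $x_1\le x_2$, it follows that $x_1\in\spi{F}(F(x_2))$ as well, which is exactly the statement $F(x_1)\subset F(x_2)$. This is the main conceptual step, and it is where the strength of the hypothesis (allowing arbitrary $B$, in particular $B=F(x_2)$) is essential—for ordinary usc one could only test against open sets, and $F(x_2)$ need not be open.

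For the reverse implication, assume the monotonicity condition and let $B\subset Y$ be an arbitrary subset. I need to show $\spi{F}(B)$ is a down-set. So take $x_2\in\spi{F}(B)$, meaning $F(x_2)\subset B$, and let $x_1\le x_2$. By the hypothesis, $F(x_1)\subset F(x_2)\subset B$, whence $x_1\in\spi{F}(B)$. Since in a finite $T_0$ space the open sets are exactly the down-sets, this proves $\spi{F}(B)$ is open, and so $F$ is $\susc$.

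The $\slsc$ case is entirely analogous, working with large preimages instead of small ones: one tests against the complement, or more directly observes that $F$ is $\slsc$ iff $\lpi{F}(B)$ is a down-set for every $B$. For the forward direction one uses $B=\{y\}$ for a point $y\in F(x_2)$ together with $x_1\le x_2$ to deduce $y\in F(x_1)$, giving $F(x_2)\subset F(x_1)$; for the converse one checks directly that the monotonicity forces $\lpi{F}(B)$ to be a down-set. I do not anticipate any real obstacle; the only point requiring care is the choice of the test subset $B$ in the forward direction (taking $B$ to be the value $F(x_2)$ itself in the $\susc$ case, or a singleton in the $\slsc$ case), which crucially exploits that strong semicontinuity permits testing against non-open subsets.
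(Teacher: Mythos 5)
Your proposal is correct and is essentially the paper's own argument: in the forward direction both proofs test openness of the small preimage against the particular subset $B=F(x_2)$ and use that open sets in a finite $T_0$ space are exactly the down-sets, and the converse is the same direct verification that monotonicity of values makes $\spi{F}(B)$ a down-set for arbitrary $B$. The paper dismisses the $\slsc$ case as ``analogous,'' and your singleton test $B=\{y\}$ with $y\in F(x_2)$ is a correct way to make that analogy precise (note that testing with $B=F(x_2)$ itself would not work there, so this detail is worth having).
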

\begin{proof}
We only establish the first equivalence, since the second one can be proved analogously. Suppose that~$F$ is $\susc$ and that $x_1\le x_2\in X$. Then $F^{-1}(F(x_2))\subseteq X$ is open and since $x_2\in F^{-1}(F(x_2))$, then $x_1\in \opn{x_2} \subset F^{-1}(F(x_2))$. That is, $F(x_1)\subset F(x_2)$.

Conversely, suppose that for all $x_1, x_2 \in X$ with $x_1 \le x_2$ we have $F(x_1) \subset F(x_2)$ and let $B\subset Y$. Let $x_2\in F^{-1}(B)$. If $x_1\le x_2$, then $F(x_1)\subset F(x_2)\subset B$. This shows that $\opn{x_2}\subseteq F^{-1}(B)$. Thus, $F^{-1}(B)$ is open.
\end{proof}

As we will see later on, the notion of strong semicontinuity will be used to show that a multivalued map between finite $T_0$ spaces induces a well-defined map in homology. While at first glance this seems to be a severe restriction, there are situations in which strong semicontinuity is equivalent to semicontinuity. To describe this, recall that a multivalued map~$F:X\multimap Y$ has closed (or open) values, if for all $x \in X$ the set $F(x) \subset Y$ is closed (or open) in the topological space~$Y$. Then the following result is immediate.

\begin{lema}[Equivalence of strong and regular semicontinuity]
\label{lem:escft0s}
Let $X$ and $Y$ denote two topological spaces, and let $F:X\multimap Y$ denote an arbitrary multivalued map between them. Then we have:
\begin{eqnarray*}
  \mbox{$F$ is usc with open values} & \Rightarrow &
    \mbox{$F$ is $\susc$} \; , \\
  \mbox{$F$ is lsc with closed values} & \Rightarrow &
    \mbox{$F$ is $\slsc$} \; .
\end{eqnarray*}
In other words, for multivalued maps with open values upper semicontinuity is equivalent to strong upper semicontinuity, and for multivalued maps with closed values lower semicontinuity is equivalent to strong lower semicontinuity.
\end{lema}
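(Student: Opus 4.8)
The reverse implications require nothing: a $\susc$ (resp.\ $\slsc$) map is, by Definition~\ref{def:ssc}, usc (resp.\ lsc) with respect to \emph{every} topology on $Y$, in particular the given one, so only the two forward implications carry content. The plan is to deduce each strong version by comparing the small (resp.\ large) preimage of an arbitrary set $B\subset Y$ with the corresponding preimage of a suitable \emph{open} set, using the hypothesis on the values of $F$ to pass between them.

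For the first implication I would argue in full generality, for arbitrary topological spaces. Let $B\subset Y$ be arbitrary. Since every value $F(x)$ is open, the inclusion $F(x)\subset B$ holds if and only if $F(x)\subset\operatorname{int}B$, because $\operatorname{int}B$ is the largest open subset of $B$. Hence $\spi{F}(B)=\spi{F}(\operatorname{int}B)$, and the right-hand side is open because $\operatorname{int}B$ is open and $F$ is usc. As $B$ was arbitrary, this is exactly the definition of $\susc$.

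The second implication is where the genuine obstacle lies: there is no topological counterpart of the interior trick for the large preimage, since the natural substitute — replacing $B$ by its closure $\overline{B}$ — produces a \emph{closed} set, about which lower semicontinuity says nothing. I would therefore carry out this half in the finite $T_0$ setting that governs the rest of the paper, where \emph{closed values} means that each $F(x)$ is an up-set and lower semicontinuity is captured by condition~($\ell_d$) of Lemma~\ref{lem:scft0s}. By the combinatorial characterization in Lemma~\ref{lem:sscft0s} it then suffices to verify that $x_1\ge x_2$ implies $F(x_1)\subset F(x_2)$. So fix $x_1\ge x_2$ and let $y_1\in F(x_1)$; condition~($\ell_d$) yields $y_2\in F(x_2)$ with $y_1\ge y_2$, and since the value $F(x_2)$ is an up-set containing $y_2$ we conclude $y_1\in F(x_2)$. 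Thus $F(x_1)\subset F(x_2)$, which is precisely $\slsc$ by Lemma~\ref{lem:sscft0s}. The crux is the interplay between the order-theoretic meaning of closed values (up-sets) and the selection property~($\ell_d$): it is this order structure, rather than any purely topological feature, that upgrades lower to strong lower semicontinuity, and the symmetry with the open-values case is only restored once one translates both into the combinatorial language of posets.
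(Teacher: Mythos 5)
Your handling of the first implication is correct, fully general, and essentially the paper's own argument: the paper fixes $x\in\spi{F}(B)$ and observes that $\spi{F}(F(x))$ is an open neighbourhood of $x$ contained in $\spi{F}(B)$, while you record the single identity $\spi{F}(B)=\spi{F}(\mathrm{int}\,B)$; both rest on exactly the same point, namely that an open value contained in $B$ lies inside an open subset of $B$. Your dismissal of the reverse implications also matches the remark the paper makes right after Definition~\ref{def:ssc}.

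The real divergence is in the second implication. The paper asserts, still for arbitrary topological spaces, that it ``follows similarly''; you decline to prove that and instead work in the finite $T_0$ setting, where your argument is correct: condition ($\ell_d$) of Lemma~\ref{lem:scft0s} produces $y_2\le y_1$ with $y_2\in F(x_2)$, the closed value $F(x_2)$ is an up-set, so $y_1\in F(x_2)$, and Lemma~\ref{lem:sscft0s} concludes. Moreover, your suspicion that the general case hides a genuine obstacle is justified: the second implication is \emph{false} for arbitrary spaces. Take $X=Y=\R$ and $F(x)=[x,x+1]$. This map has closed values and is lsc, since $\lpi{F}((a,b))=(a-1,b)$ and large preimages commute with unions; yet $\lpi{F}(\{0\})=[-1,0]$ is not open, so $F$ is not $\slsc$. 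What the dual of the paper's argument actually requires is a smallest open neighbourhood $\opn{y}$ of each $y\in Y$: if $x\in\lpi{F}(B)$ and $y\in F(x)\cap B$, then $\lpi{F}(\opn{y})$ is open by lsc, contains $x$, and lies in $\lpi{F}(\{y\})\subset\lpi{F}(B)$, because any $z\in F(x')\cap\opn{y}$ satisfies $y\in\cl{z}\subset\cl{F(x')}=F(x')$. Hence the second half does hold whenever $Y$ is an Alexandroff space --- in particular finite $T_0$, which is all the paper ever uses, and this variant even leaves $X$ arbitrary, slightly improving on your version, which needs both spaces finite --- but it fails in the stated generality. In short, your proof establishes everything that is actually true and needed downstream; the gap it appears to leave is a defect of the paper's statement and of its one-line second half, not of your argument.
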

\begin{proof}
Suppose $F$ is usc with open values. Let $B\subset Y$ be any subset. We want to show that $F^{-1}(B)$ is open. Let $x\in F^{-1}(B)$. Then, $F(x)\subset B$. By hypothesis $F^{-1}(F(x))$ is an open neighborhood of $x$ contained in $F^{-1}(B)$. Therefore, $F^{-1}(B)$ is open. The second implication follows similarly.
\end{proof}

We close this section by observing that the notion of strong upper semicontinuity is completely natural when working with finite spaces. Indeed, a multivalued map~$F:X\multimap Y$ between finite $T_0$ spaces can be identified with a single-valued map $f_F : X \to \mathcal{P}(Y)$, where~$\mathcal{P}(Y)$ denotes the power set of~$Y$. The set~$\mathcal{P}(Y)$ has a natural poset structure given by inclusion, so it is a finite $T_0$ space. By Lemma \ref{lem:sscft0s}, $F$ is susc if and only if $f_F$ is order-preserving, i.e., continuous. Thus, strong upper semicontinuity of
the multivalued map $F$ corresponds to the continuity of the associated single-valued map $f_F$. Therefore, it could be natural to call the strong upper semicontinuous multivalued maps simply continuous. We do not do that for two reasons. Firstly, in the context of dynamics we need to be able to iterate $F$. To do that we need the domain and codomain of $F$ to be the same topological space.
This is not true in the case of $f_F$ even if $X=Y$.
Secondly, in the classical theory of multivalued maps, the term \textit{continuous} is already used for a map $F:X\multimap Y$ which is simultaneously upper semicontinuous and lower semicontinuous. One can show that a multivalued map is continuous
in this sense if and only if the single-valued map $f_F:X\to \mathcal{P}(Y)$ is continuous  with respect to a different topology
on $\mathcal{P}(Y)$: the so called ``finite topology'' \cite[Definition~1.7 and Corollary~9.3]{Mic}.

If $Y$ is a finite set, $\mathcal{P}(Y)^{op}$ denotes the power set of $Y$ with the order given by reverse inclusion. If~$X$ is a finite $T_0$ space, then a multivalued map $F:X\multimap Y$ is $\slsc$ if and only if the single-valued map $f_F : X \to \mathcal{P}(Y)^{op}$ is continuous.

Finally, we can relate strong upper to strong lower semicontinuity. If $X$ is a finite $T_0$ space, according to Lemma~\ref{lem:sscft0s} a multivalued map $F:X\multimap Y$ is $\susc$ if and only if the map $F':X^{op}\multimap Y$ which coincides with $F$ in underlying sets is $\slsc$. For the special case $Y = X$ which is mainly considered in the present paper, we have that $F:X\multimap X$ is $\slsc$ if and only if the map $F^{op}:X^{op}\multimap X^{op}$ which coincides with $F$ in the underlying sets is $\susc$. Even more is true. One can easily see that $F:X\multimap Y$ is lsc if and only if $F^{op}:X^{op}\multimap Y^{op}$ is usc. This provides us with another way to explain the symmetry in the statements in Lemma~\ref{lem:scft0s}.

\begin{obs} \label{susciffc}
If $X$ is a not necessarily finite poset, then it can also be seen as a topological space in which open sets are the down-sets. The sets $\opn{x}=\{y\in X \ y\le x\}$ constitute a basis for the topology. As before, a multivalued map~$F:X\multimap Y$ between any topological spaces~$X$ and~$Y$, can be identified with a single-valued map $f_F : X \to \mathcal{P}(Y)$, where~$\mathcal{P}(Y)$ is the space associated to the order given by the inclusion. Now, if $B\subset Y$ is any subset $F^{-1}(B)=\{x\in X | F(x) \subset B\}=f_F^{-1}(\opn{B})$. Therefore $F$ is susc if and only if $f_F$ is continuous. This generalizes the comment made above for $X,Y$ being finite.
\end{obs}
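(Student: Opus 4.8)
The plan is to reduce the entire equivalence to the single set-theoretic identity
\[
  F^{-1}(B) \;=\; f_F^{-1}(\opn{B})
  \qquad\mbox{for every } B \subseteq Y,
\]
where $\opn{B} = \{C \in \mathcal{P}(Y) : C \subseteq B\}$ is the principal down-set determined by the point $B$ of the poset $\mathcal{P}(Y)$. Everything else is then purely formal.

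First I would record the needed topological facts about the target. Equipped with the order of inclusion, $\mathcal{P}(Y)$ carries the down-set topology, and since arbitrary intersections of down-sets are again down-sets, this is an Alexandrov space: every point $B$ possesses a smallest open neighborhood, namely $\opn{B}$, and the family $\{\opn{B} : B \subseteq Y\}$ is a basis for the topology, because each down-set is the union of the principal down-sets of its elements. The only thing worth checking is that the possible infiniteness of $Y$ causes no difficulty, and it does not, since the Alexandrov property is precisely closure of the open sets under arbitrary intersections. This is exactly the structure already exploited for finite posets earlier in the section.

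Next I would verify the identity. By definition $F^{-1}(B) = \{x \in X : F(x) \subseteq B\}$, while $f_F^{-1}(\opn{B}) = \{x \in X : f_F(x) \subseteq B\}$; since $f_F(x)$ and $F(x)$ denote the same subset of $Y$, the two sets coincide. The equivalence is then immediate in both directions. If $F$ is $\susc$, then for an arbitrary open set $\bigcup_i \opn{B_i}$ of $\mathcal{P}(Y)$ one has $f_F^{-1}\!\left(\bigcup_i \opn{B_i}\right) = \bigcup_i f_F^{-1}(\opn{B_i}) = \bigcup_i F^{-1}(B_i)$, a union of open subsets of $X$, so $f_F$ is continuous. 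Conversely, if $f_F$ is continuous, then for every $B \subseteq Y$ the basic open set $\opn{B}$ has open preimage $f_F^{-1}(\opn{B}) = F^{-1}(B)$, which is precisely strong upper semicontinuity.

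There is essentially no hard step here. The only point demanding care is the assertion that the principal down-sets $\opn{B}$ still form a basis for the topology on $\mathcal{P}(Y)$ when $Y$ is infinite, so that continuity of $f_F$ may be tested on these basic sets alone; this is exactly where one uses that both $X$ and $\mathcal{P}(Y)$ carry Alexandrov (down-set) topologies rather than arbitrary ones. Once that is in hand, the argument is pure bookkeeping and parallels verbatim the finite-space discussion preceding the remark.
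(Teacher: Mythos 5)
Your proposal is correct and takes essentially the same route as the paper: the remark is justified there by precisely the identity $F^{-1}(B)=f_F^{-1}(\opn{B})$ for all $B\subseteq Y$, combined with the fact that the principal down-sets $\opn{B}$ form a basis of the down-set topology on $\mathcal{P}(Y)$, so continuity of $f_F$ can be tested on these basic sets. Your only inessential slip is attributing the reduction to basic open sets to the Alexandrov structure on \emph{both} $X$ and $\mathcal{P}(Y)$; in fact only the basis property of the codomain is used, and the equivalence holds for an arbitrary topological space $X$, which is why the paper states it for ``any topological spaces $X$ and $Y$.''
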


\section{Homomorphisms induced in homology}
\label{sechomhom}

The natural first step towards deriving a Lefschetz fixed point theorem for multivalued maps $F: X\multimap X$ between finite $T_0$ spaces is the definition of a Lefschetz number for such maps. This in turn requires that~$F$ induces a well-defined map in homology. While we restrict our attention to strongly semicontinuous maps as introduced in Definition~\ref{def:ssc} of the last section, we need one additional assumption. Recall that a multivalued map~$F$ is said to have acyclic values if for every $x \in X$, $F(x)$ is acyclic. We denote by $H_n(X)$ the unreduced homology groups of $X$ with integer coefficients.

We will need the following auxiliary result, whose construction is inspired by the case of upper semicontinuous multivalued maps with acyclic values discussed in~\cite[p.~160]{Gor}.

\begin{lema}
\label{lem:inducedhommap}
Let~$X$ be a finite $T_0$ space and let~$Y$ be any topological space. Let $F:X\multimap Y$ be a $\susc$ multivalued map with acyclic values. Consider~$F$ as the subspace of~$X \times Y$ which consists of all pairs~$(x,y)$ for which $y \in F(x)$. Let~$p_1 : F \to X$ denote the projection onto the first coordinate. Then~$(p_1)_*:H_n(F) \to H_n(X)$ is an isomorphisms for every $n\ge 0$.
\end{lema}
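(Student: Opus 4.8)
The plan is to apply McCord's homology theorem (Theorem \ref{quillenhomology}) directly to the projection $p_1 : F \to X$, taking the graph $F$ (as a subspace of $X \times Y$) as the arbitrary space and $X$ as the finite $T_0$ space. The map $p_1$ is continuous, being the restriction of a coordinate projection, so the only hypothesis left to verify is that $p_1^{-1}(\opn{x})$ is acyclic for every $x \in X$. Writing the preimage out, $p_1^{-1}(\opn{x}) = \{(x',y) \in F : x' \le x\} = \{(x',y) : x' \le x,\ y \in F(x')\}$, since $\opn{x} = \{x' \in X : x' \le x\}$.

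The heart of the argument is to show that this preimage is homotopy equivalent to $F(x)$, which is acyclic by hypothesis. Here the strong upper semicontinuity enters decisively: by Lemma \ref{lem:sscft0s}, the relation $x' \le x$ forces $F(x') \subset F(x)$. Consequently the two maps $\iota : F(x) \to p_1^{-1}(\opn{x})$, $y \mapsto (x,y)$, and $\rho : p_1^{-1}(\opn{x}) \to F(x)$, $(x',y) \mapsto y$, are both well defined and continuous --- $\rho$ lands in $F(x)$ precisely because $y \in F(x') \subset F(x)$ --- and they satisfy $\rho \circ \iota = \mathrm{id}_{F(x)}$.

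It then remains to show $\iota \circ \rho \simeq \mathrm{id}$ on $p_1^{-1}(\opn{x})$. Since $\opn{x}$ has maximum $x$, the constant map $c_x$ dominates the identity in the pointwise order, so by the Stong-type homotopy recalled in Section \ref{secfs} there is a continuous $\gamma : \opn{x} \times [0,1] \to \opn{x}$ with $\gamma(\cdot,0) = c_x$ and $\gamma(\cdot,t) = \mathrm{id}$ for $t > 0$. I would set $H((x',y),t) = (\gamma(x',t),y)$ and check that it is continuous (it is essentially $\gamma \times \mathrm{id}_Y$ after rearranging factors, then included into $X \times Y$) and that its image stays inside $p_1^{-1}(\opn{x})$: for $t > 0$ the point is $(x',y)$ with $y \in F(x')$, while for $t = 0$ it is $(x,y)$ with $y \in F(x') \subset F(x)$. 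This $H$ is a homotopy from $\iota \circ \rho$ to the identity, so $\iota$ and $\rho$ are mutually inverse homotopy equivalences and $p_1^{-1}(\opn{x})$ has the homology of $F(x)$; in particular it is acyclic.

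I expect the main obstacle to be exactly this last verification, namely ensuring that the homotopy contracting the $X$-coordinate to $x$ never leaves the subspace $F \subset X \times Y$. This is where the inclusion $F(x') \subset F(x)$ for $x' \le x$ --- that is, strong rather than merely ordinary upper semicontinuity --- is indispensable: without it the point $(x,y)$ need not belong to $F$, and the retraction would break down. Once acyclicity of every preimage $p_1^{-1}(\opn{x})$ is established, Theorem \ref{quillenhomology} immediately yields that $(p_1)_*$ is an isomorphism in every degree.
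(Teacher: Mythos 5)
Your proposal is correct and follows essentially the same route as the paper: both apply McCord's theorem for homology (Theorem \ref{quillenhomology}) to $p_1$, show $p_1^{-1}(\opn{x})\simeq F(x)$ via the maps $y\mapsto(x,y)$ and $(x',y)\mapsto y$, and contract the $X$-coordinate using the pointwise-order homotopy $c_x \ge 1_{\opn{x}}$, with strong upper semicontinuity guaranteeing via Lemma \ref{lem:sscft0s} that everything stays inside the graph $F$. Your explicit identification of where susc is indispensable matches the paper's argument exactly.
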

\begin{proof}

By McCord's Theorem for homology (Theorem \ref{quillenhomology}) it suffices to show that~$p_1^{-1}(U_x)$ $\subset X \times Y$ is acyclic for every $x\in X$. This will be accomplished by verifying that this space is homotopy equivalent to the image~$F(x) \subset Y$.

Define $i : F(x) \to p_1^{-1}(U_x)$ by $i(y) = (x,y)$. This map is well-defined and continuous. In addition, define $r : p_1^{-1}(U_x) \to F(x)$ by $r(z,y) = y$ for all $z \in U_x$ and $y \in F(z)$. Note that $z \le x$ implies $F(z) \subseteq F(x)$ due to Lemma~\ref{lem:sscft0s}, and therefore the map~$r$ is well-defined and continuous. These definitions readily show that we have both $ri = 1_{F(x)}$ and $ir(z,y) = (x,y)$ for all $z \in U_x$ and $y \in F(z)$, and in order to complete the proof we only need to show that~$ir$ is homotopic to the identity~$1_{p_1^{-1}(U_x)}$.

As explained in Section \ref{secfs}, there exists a homotopy $H:U_x \times [0,1] \to U_x$ from the constant function~$c_x$ to the identity~$1_{U_x}$ defined by $H(z,0)=x$ and $H(z,t)=z$ for all $t > 0$ and $z \in U_x$. Then $H' = H \times 1_Y : U_x \times Y \times [0,1] \to U_x \times Y$ is a homotopy from $c_x \times 1_Y$ to $1_{U_x\times Y}$, and its restriction to $p_1^{-1}(U_x) \times [0,1]$ gives the desired homotopy from the composition~$ir$ to~$1_{p_1^{-1}(U_x)}$. Thus, the inverse image~$p_1^{-1}(U_x)$ is indeed acyclic for every $x \in X$ and $(p_1)_* : H_n(F) \to H_n(X)$ is an isomorphism for each $n \ge 0$.
\end{proof}

\begin{defi}
Let~$X$ be a finite $T_0$ space and let~$Y$ be any topological space. If $F:X\multimap Y$ is a $\susc$ multivalued map with acyclic values, we define $F_*:H_n(X)\to H_n(Y)$ as the composition $F_*=(p_2)_* (p_1)_*^{-1}$, where $p_2:F\to Y$ denotes the projection onto the second coordinate.

If $F:X\multimap Y$ is a $\slsc$ multivalued map with acyclic values, it induces homomorphisms in homology in the following way. The map $F':X^{op}\multimap Y$ which coincides with $F$ in the underlying sets is $\susc$ and $F'_*:H_n(X^{op})\to H_n(Y)$ is already defined. Since $\K(X^{op})=\K(X)$, for each $n\ge 0$ there is a well-defined homomorphism $F_*=F'_*(\mu_{X^{op}})_*(\mu_X)_*^{-1}: H_n(X)\to H_n(Y)$.  

\end{defi}

Lemma \ref{lem:inducedhommap} holds also for $\slsc$ maps when $X$ and $Y$ are both finite $T_0$ spaces. Concretely, if $F:X\multimap Y$ is a $\slsc$ multivalued map with acyclic values, then $p_1:F\to X$ induces isomorphisms in homology. This can be proved from Lemma \ref{lem:inducedhommap} using that $F^{op}:X^{op}\multimap Y^{op}$ is $\susc$. The projection $p_1^{op}:F^{op}\to X^{op}$ induces isomorphisms in homology and then so does $p_1$. In this case, it can be proved that $F_*:H_n(X)\to H_n(Y)$ coincides with $(p_2)_*(p_1)^{-1}_*$. Therefore, for $X$ and $Y$ finite, $F_*$ can be defined as the composition $(p_2)_*(p_1)^{-1}_*$ in both cases, for $\susc$ and $\slsc$ maps. This result will not be needed in the present article, but we include a proof at the end of this section for future reference.

Note that if in the hypothesis of Lemma \ref{lem:inducedhommap} we ask for the values~$F(x)$ of~$F$ to be contractible subspaces of~$Y$ or, more generally, to be weakly contractible, then  Theorem~\ref{mccord} (McCord's Theorem) can be used to define $F_* : \pi_n(X) \to \pi_n(Y)$. Recall that an analogue of the Whitehead theorem does not hold for finite spaces and moreover there exist weakly contractible finite spaces which are not contractible~\cite[Example~4.2.1]{Bar2}.

Our definition of the Lefschetz number is analogous to the classical case.

\begin{defi}[Lefschetz number]
\label{def:lefschetz}
Let~$X$ denote a finite~$T_0$ space, and let $F : X \multimap X$ be a multivalued map with acyclic values which is $\susc$ or $\slsc$.
Let~$F_* : H_n(X) \to H_n(X)$ denote the induced maps in homology, and for every $n \ge 0$ let~$T_n(X)$ be the torsion subgroup of~$H_n(X)$. Then the {\em Lefschetz number\/}~$L(F)$ of~$F$ is defined as
\begin{displaymath}
  L(F) = \sum_{n = 0}^\infty (-1)^n \,\tr (F_n) \; ,
\end{displaymath}
where $\tr (F_n)$ is the trace of the homomorphism $H_n(X) / T_n(X) \to H_n(X) / T_n(X)$ induced by the homomorphism~$F_* : H_n(X) \to H_n(X)$.
\end{defi}

If $X$ is a finite $T_0$ space and $F:X\multimap X$ is $\slsc$ with acyclic values, the definition of $L(F)$ depends on the $\susc$ map $F':X^{op}\multimap X$. On the other hand, $F^{op}:X^{op}\multimap X^{op}$ is also $\susc$ and $L(F^{op})$ is defined as well. The next result shows that in fact these two numbers are the same.

\begin{lema} \label{lslsc}
Let $X$ be a finite $T_0$ space and let $F:X\multimap X$ be a $\slsc$ multivalued map with acyclic values. Then $L(F)=L(F^{op})$.
\end{lema}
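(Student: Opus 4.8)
The plan is to produce a single isomorphism that intertwines the homology maps induced by $F$ and by $F^{op}$, so that in every degree the two endomorphisms become conjugate and therefore have the same trace on the free part. The required isomorphism is the duality of Section~\ref{secfs}: for a finite $T_0$ space $Z$ set $\psi_Z := (\mu_{Z^{op}})_*(\mu_Z)_*^{-1} : H_n(Z)\to H_n(Z^{op})$, which is an isomorphism because $\K(Z)=\K(Z^{op})$. Being an isomorphism of finitely generated abelian groups it carries $T_n(Z)$ onto $T_n(Z^{op})$ and hence descends to an isomorphism $H_n(Z)/T_n(Z)\to H_n(Z^{op})/T_n(Z^{op})$. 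The property I would isolate first is naturality: for every continuous $g:Z\to W$ of finite $T_0$ spaces,
\[
  \psi_W\, g_* \;=\; (g^{op})_*\,\psi_Z ,
\]
which follows immediately from $g\circ\mu_Z=\mu_W\circ\K(g)$, $g^{op}\circ\mu_{Z^{op}}=\mu_{W^{op}}\circ\K(g^{op})$ and $\K(g)=\K(g^{op})$.

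Next I would reduce the lemma to an intertwining relation. Since $\tr$ is invariant under conjugation on the free quotients, it suffices to prove, for each $n$,
\[
  \psi_X\, F_* \;=\; (F^{op})_*\,\psi_X
  \qquad\text{as maps}\quad H_n(X)\to H_n(X^{op}).
\]
Granting this, $\psi_X$ conjugates the endomorphism $F_*$ of $H_n(X)$ to the endomorphism $(F^{op})_*$ of $H_n(X^{op})$; passing to the free quotients through the descended isomorphism above gives $\tr(F_n)=\tr((F^{op})_n)$ for every $n$, and summing with signs yields $L(F)=L(F^{op})$.

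To establish the intertwining I would work with the graph of $F$. Let $G\subset X\times X$ be the subspace $\{(x,y): y\in F(x)\}$ with projections $\pi_1,\pi_2:G\to X$; the graph of $F^{op}$ is $G^{op}\subset X^{op}\times X^{op}$ with projections $\pi_1^{op},\pi_2^{op}:G^{op}\to X^{op}$ given by the same underlying maps. Using the symmetric description of the induced maps stated just before this lemma, $F_*=(\pi_2)_*(\pi_1)_*^{-1}$ and $(F^{op})_*=(\pi_2^{op})_*(\pi_1^{op})_*^{-1}$, and applying naturality to $\pi_1$ and to $\pi_2$ (so $\psi_X(\pi_i)_* = (\pi_i^{op})_*\psi_G$), a direct chase gives
\[
  \psi_X\,(\pi_2)_*(\pi_1)_*^{-1}
  = (\pi_2^{op})_*\,\psi_G\,(\pi_1)_*^{-1}
  = (\pi_2^{op})_*(\pi_1^{op})_*^{-1}\,\psi_X ,
\]
where the last equality uses $\psi_G(\pi_1)_*^{-1}=(\pi_1^{op})_*^{-1}\psi_X$, rearranged from the naturality square for $\pi_1$. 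This is exactly the required relation.

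The main obstacle is the symmetric description $F_*=(\pi_2)_*(\pi_1)_*^{-1}$ itself. By definition $F_*$ is built from the $\susc$ map $F':X^{op}\multimap X$, that is from the graph topology $G_{X^{op},X}\subset X^{op}\times X$, whereas $(F^{op})_*$ is built from $G^{op}=G_{X^{op},X^{op}}$; these two poset topologies on the common underlying set have different order complexes and are not related by any identity-on-vertices simplicial map, so the two presentations must be reconciled before the clean chase above applies. Should one prefer to bypass the symmetric description, the whole lemma reduces to the single identity $\psi_X F'_* = (F^{op})_*$: indeed the trace swap $\tr(F'_*\psi_X)=\tr(\psi_X F'_*)$ already expresses $L(F)$ through $\tr(\psi_X F'_*)$, and it is precisely this identity — the homological comparison of the two graph topologies — where the real work lies.
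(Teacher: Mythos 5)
Your reduction is formally sound as far as it goes: the naturality $\psi_W g_* = (g^{op})_*\psi_Z$ of $\psi_Z=(\mu_{Z^{op}})_*(\mu_Z)_*^{-1}$, the chase deducing $\psi_X F_* = (F^{op})_*\psi_X$ from the symmetric description, and the passage from conjugacy to equality of traces on the free quotients are all correct. But, as you yourself concede in your closing paragraph, the argument never proves the one statement that carries all the content: the symmetric description $F_*=(\pi_2)_*(\pi_1)_*^{-1}$ for the $\slsc$ map $F$, equivalently the identity $\psi_X F'_*=(F^{op})_*$. In the paper that description is merely \emph{asserted} before Lemma \ref{lslsc}; its proof is deferred to the end of the section, the paper states explicitly that the result is not needed for any of its arguments, and its proof is described there as a refinement of the paper's own proof of this lemma. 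So what you have written is a correct, non-circular reduction of the lemma to a statement containing the whole difficulty --- the homological comparison of the two graph topologies $F'\subseteq X^{op}\times X$ and $F^{op}\subseteq X^{op}\times X^{op}$ --- and that comparison is missing.

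The missing construction, which is the heart of the paper's proof, is the auxiliary $\susc$ multivalued map $\K\circ F:X^{op}\multimap \K(X)$ defined by $x\mapsto \K(F(x))$. It has acyclic values, so by Lemma \ref{lem:inducedhommap} the first projection $p_1$ of its graph $\K\circ F\subseteq X^{op}\times\K(X)$ induces isomorphisms in homology; likewise $(p_1^{op})_*$ and $(p_1')_*$ are isomorphisms because $F^{op}$ and $F'$ are $\susc$ with acyclic values. The McCord maps then give continuous maps $1\times\mu_{X^{op}}:\K\circ F\to F^{op}$ and $1\times\mu_X:\K\circ F\to F'$ (well defined because a point of $\K(F(x))$ has support a chain in $F(x)$, so its image under $\mu_X$ or $\mu_{X^{op}}$ again lies in $F(x)$), and these maps commute with all the projections. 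Chasing the resulting diagram yields $F^{op}_*=(\mu_{X^{op}})_*(\mu_X)_*^{-1}(p_2')_*(p_1')_*^{-1}=\psi_X F_*\psi_X^{-1}$, which is exactly your intertwining relation. This single diagram is the real work you pointed to; without it (or an independent proof of the symmetric description, which the paper obtains by refining this same diagram), your proposal reformulates the problem rather than solves it.
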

\begin{proof}
We consider $F^{op}$ as a subspace of $X^{op}\times X^{op}$ and call $p_1^{op}, p_2^{op}$ the projections onto the first and second coordinates. Similarly $F'$ is a subspace of $X^{op}\times X$ and we call $p_1',p_2'$ the projections onto $X^{op}$ and $X$ respectively. Define a multivalued map $\K \circ F: X^{op}\to \K(X)$ by $\K \circ F (x)=\K(F(x))$. Then $\K \circ F$ is $\susc$ with acyclic values. Once again $\K \circ F$ is considered as a subspace of $X^{op}\times \K(X)$ and by Lemma \ref{lem:inducedhommap} $p_1:\K \circ F \to X^{op}$ induces isomorphisms in homology. We have the following commutative diagram

\begin{displaymath}
\xymatrix@C=14pt{ & F^{op} \ar@{->}_{p_1^{op}}[dl] \ar@{->}^{p_2^{op}}[rr] & & X^{op} & \\
								X^{op} & & \K \circ F \ar@{->}_{p_1}[ll] \ar@{->}^{p_2}[rr] \ar@{->}_{1\times \mu_{X^{op}}}[ul] \ar@{->}^{1\times \mu_{X}}[dl] & & \K(X) \ar@{->}^{\mu_{X}}[dl] \ar@{->}_{\mu_{X^{op}}}[ul] \\
								& F' \ar@{->}^{p_1'}[ul] \ar@{->}_{p_2'}[rr] & & X & } 
\end{displaymath}

Note that the maps $1\times \mu_{X^{op}}:\K \circ F\to F^{op}$ and $1\times \mu_{X}:\K \circ F\to F'$ are well-defined. By definition $F^{op}_*=(p_2^{op})_*(p_1^{op})_*^{-1}: H_n(X^{op})\to H_n(X^{op})$ and $F_*=(p_2')_*(p_1')_*^{-1}(\mu_{X^{op}})_*(\mu_X)_*^{-1}$. By commutativity of the diagram $$F^{op}_*=(\mu_{X^{op}})_*(\mu_{X})_*^{-1}(p_2')_*(p_1')_*^{-1}=(\mu_{X^{op}})_*(\mu_{X})_*^{-1}F_*(\mu_{X})_*(\mu_{X^{op}})_*^{-1}.$$

Thus, $F^{op}_*$ and $F_*$ are conjugate so they have the same trace in each degree and then $L(F^{op})=L(F)$.

\end{proof}

We provide a couple of instructive examples.

\begin{ej} \label{ej1}
Let~$X$ be the finite space of four points $a$, $b$, $c$, and~$d$ depicted in Figure~\ref{example1}. This space is weak homotopy equivalent to~$S^1$, since its order complex is homeomorphic to~$S^1$. Consider the multivalued map $F : X \multimap X$ defined by
\begin{displaymath}
  F(a) = \{a,b,c\} \; , \quad
  F(b) = \{a,b,d\} \; , \quad
  F(c) = \{a\} \; , \quad\mbox{and}\quad
  F(d) = \{b\} \; .
\end{displaymath}
Then~$F$ is~$\susc$ with acyclic values, and as before we can identify~$F$ with its graph, which is a subset of~$X \times X$. One can easily see that the poset $F \subseteq X \times X$ has~$8$ points and is also weak homotopy equivalent to~$S^1$.
\begin{figure}[tb]
\begin{center}
\includegraphics[scale=0.6]{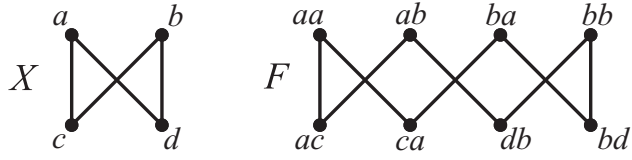}
\caption{The finite model $X$ of $S^1$ and $F\subseteq X\times X$, which are discussed in Example~\ref{ej1}.}\label{example1}
\end{center}
\end{figure}
Therefore, the maps induced in $H_1$ by $p_1: F \to X$ and $p_2: F \to X$ have $1x1$ matrices and a straightforward calculation
shows that with the same choice of generators of~$H_1(F)$ and~$H_1(X)$ for both maps the matrices are $[1]$  and $[-1]$ or vice versa.
Therefore, the map $F_* : H_1(X) \to H_1(X)$ is given by~$-1_{\Z}$, and the Lefschetz number of~$F$ can be computed as $L(F) = 1-(-1) = 2$.
\end{ej}

\begin{ej}
Let~$X$ be a finite~$T_0$ space and let~$Y$ be any topological space. Let $f:Y\to X$ be a continuous map such that $f^{-1}(U_x)$ is acyclic for every $x\in X$. By Theorem~\ref{quillenhomology}  (McCord's Theorem for homology), the induced map on homology $f_* : H_n(Y)\to H_n(X)$ is an isomorphism for every $n\ge 0$. Let $F:X\multimap Y$ be the multivalued map defined by $F(x)=f^{-1}(U_x)$. Then~$F$ is $\susc$ with acyclic values. We observe that $F_* : H_n(X) \to H_n(Y)$ is the inverse of the map~$f_*$. If $p_1 : F \to X$ and $p_2 : F \to Y$ denote our earlier projections, then $f p_2 \le p_1$. Therefore, the composition~$f p_2$ is homotopic to~$p_1$. Thus, we obtain $f_* (p_2)_* = (p_1)_* : H_n(F) \to H_n(X)$, and then $f_* F_* = f_* (p_2)_* (p_1)_*^{-1} = 1_{H_n(X)}$. It follows that $F_*$ is the inverse of~$f_*$, as claimed.
\end{ej}

We close this section with a proof of the result mentioned above, that homomorphisms induced in homology by $\slsc$ maps between finite spaces can be defined in an alternative way. This will not be used in the following sections.

\begin{prop}
Let $X$, $Y$ be finite $T_0$ spaces and let $F:X\multimap Y$ be a $\slsc$ multivalued map with acyclic values. Then $p_1:F\to X$ induces isomorphisms in homology and $F_*:H_n(X)\to H_n(Y)$ coincides with the composition $(p_2)_*(p_1)^{-1}_*$.
\end{prop}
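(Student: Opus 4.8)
The plan is to reduce to the already-settled $\susc$ case by means of the auxiliary map $\K\circ F:X^{op}\multimap \K(Y)$, $(\K\circ F)(x)=\K(F(x))$, exactly as in the proof of Lemma~\ref{lslsc}, and then to transport the outcome back to $F$ using the naturality of the duality isomorphism $\theta_A:=(\mu_{A^{op}})_*(\mu_A)_*^{-1}:H_n(A)\to H_n(A^{op})$. Since $F$ is $\slsc$, Lemma~\ref{lem:sscft0s} shows that both $F':X^{op}\multimap Y$ and $F^{op}:X^{op}\multimap Y^{op}$ (each agreeing with $F$ on underlying sets) are $\susc$, and $F^{op}$ has acyclic values because a finite poset and its opposite share the same order complex, hence the same homology.

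First I would prove that $p_1:F\to X$ induces isomorphisms in homology. As $F^{op}$ is $\susc$ with acyclic values, Lemma~\ref{lem:inducedhommap} gives that $p_1^{op}:F^{op}\to X^{op}$ does. Now $\K(p_1)=\K(p_1^{op})$ as simplicial maps $\K(F)\to\K(X)$, and the naturality relations $p_1\circ\mu_F=\mu_X\circ\K(p_1)$ and $p_1^{op}\circ\mu_{F^{op}}=\mu_{X^{op}}\circ\K(p_1^{op})$, together with the fact that all the McCord maps are homology isomorphisms, show that $(p_1)_*$ is an isomorphism if and only if $\K(p_1)_*$ is, if and only if $(p_1^{op})_*$ is. This settles the first assertion and makes $(p_1)_*^{-1}$ meaningful. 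Carried out for an arbitrary continuous $g:A\to B$, the same computation yields the naturality of $\theta$, namely $(g^{op})_*\theta_A=\theta_B\,g_*$, which I will apply below to $g=p_1$ and $g=p_2$.

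Next I would set up the comparison. The map $\K\circ F$ is $\susc$ with acyclic values, so by Lemma~\ref{lem:inducedhommap} its first projection $q_1:\K\circ F\to X^{op}$ induces isomorphisms in homology. The restrictions of $1\times\mu_Y$ and of $1\times\mu_{Y^{op}}$ give continuous maps $\beta:\K\circ F\to F'$ and $\gamma:\K\circ F\to F^{op}$, well defined because the minimum and the maximum of the support of any $\sigma\in\K(F(x))$ both belong to $F(x)$; they satisfy $p_1'\beta=q_1=p_1^{op}\gamma$, $p_2'\beta=\mu_Y\, q_2$ and $p_2^{op}\gamma=\mu_{Y^{op}}q_2$, where $q_2$ is the second projection of $\K\circ F$. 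Passing to homology (and noting that $\beta_*,\gamma_*$ are isomorphisms, being forced by $(p_1')_*\beta_*=(q_1)_*$ and $(p_1^{op})_*\gamma_*=(q_1)_*$) I obtain
$$
F'_*=(p_2')_*(p_1')_*^{-1}=(\mu_Y)_*(q_2)_*(q_1)_*^{-1}, \qquad F^{op}_*=(p_2^{op})_*(p_1^{op})_*^{-1}=(\mu_{Y^{op}})_*(q_2)_*(q_1)_*^{-1},
$$
and hence $F^{op}_*=\theta_Y F'_*$.

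Finally I would combine everything. By definition $F_*=F'_*\theta_X$. On the other hand, the naturality of $\theta$ applied to $p_1$ and $p_2$ gives $(p_1^{op})_*^{-1}=\theta_F(p_1)_*^{-1}\theta_X^{-1}$ and $(p_2^{op})_*=\theta_Y(p_2)_*\theta_F^{-1}$, whence
$$
F^{op}_*=(p_2^{op})_*(p_1^{op})_*^{-1}=\theta_Y(p_2)_*(p_1)_*^{-1}\theta_X^{-1}.
$$
Equating this with $F^{op}_*=\theta_Y F'_*$ and cancelling the isomorphisms $\theta_Y$ and $\theta_X$ yields $F'_*\theta_X=(p_2)_*(p_1)_*^{-1}$, that is $F_*=(p_2)_*(p_1)_*^{-1}$. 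I expect the only real work to be the bookkeeping: checking that $\beta$ and $\gamma$ are well defined and continuous, keeping the three graphs $F$, $F'$, $F^{op}$ (which share an underlying set but carry different orders) carefully apart, and verifying the naturality of $\theta$; once these are in place the conclusion is a routine diagram chase.
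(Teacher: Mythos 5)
Your proof is correct and follows essentially the same route as the paper's: both reduce to the $\susc$ case via $F'$, $F^{op}$ and the auxiliary map $\K\circ F$, use the comparison maps $1\times\mu_Y$ and $1\times\mu_{Y^{op}}$ together with McCord naturality, and deduce that $(p_1)_*$ is an isomorphism from the fact that $(p_1^{op})_*$ is. The only difference is presentational — the paper reads off the conclusion from one large commutative diagram, while you package the same relations into the naturality of $\theta$ and an explicit algebraic chase.
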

\begin{proof}
The proof is a refinement of the proof of Lemma \ref{lslsc}. Consider the multivalued map $\K \circ F:X^{op}\multimap \K(Y)$ defined by $\K \circ F (x)=\K(F(x))$. It is $\susc$ with acyclic values, so $p_1:\K \circ F \subseteq X^{op} \times \K(Y) \to X^{op}$ induces isomorphisms in homology. The maps $F':X^{op}\multimap Y$ and $F^{op}:X^{op}\multimap Y^{op}$ are also $\susc$ with acyclic values, so $p_1':F'\to X^{op}$ and $p_1^{op}:F^{op}\to X^{op}$ induce isomorphisms in homology as well. On the other hand, by applying the functor $\K$ to $X\leftarrow F \to Y$, we obtain maps $\K(p_1):\K(F)\to \K(X)$ and $\K(p_2):\K(F)\to \K(Y)$. There is a commutative diagram

\begin{displaymath}
\xymatrix@C=14pt{ & & & & & & F' \ar@{->}^{p_2'}[dddd] \ar@{-->}_{p_1'}[lllllldd] \\
									& & & & & \K \circ F \ar@{-->}_{p_1}[llllld] \ar@{-->}^{1\times \mu_{Y^{op}}}[llld] \ar@{->}^{p_2}[dd] \ar@{-->}_{1\times \mu_Y}[ru] & \\
									X^{op} & & F^{op} \ar@{-->}^{p_1^{op}}[ll] \ar@{->}_{p_2^{op}}[rr] & & Y^{op} & & \\
									& \K(X) \ar@{-->}^{\mu_{X^{op}}}[lu] \ar@{-->}_{\mu_X}[rd] & & \K(F) \ar@{-->}^{\mu_{F^{op}}}[lu] \ar@{-->}^{\K(p_1)}[ll] \ar@{->}_{\K(p_2)}[rr] \ar@{-->}_{\mu_F}[rd] & & \K(Y) \ar@{-->}_{\mu_Y}[rd] \ar@{-->}^{\mu_{Y^{op}}}[lu] & \\
									& & X & & F \ar@{-->}^{p_1}[ll] \ar@{->}_{p_2}[rr] & & Y } 
\end{displaymath} in which the dashed arrows represent maps that induce isomorphisms in homology. Since $p_1^{op}$ induces isomorphisms in homology, so does $\K(p_1)$ and then $p_1:F\to X$. By commutativity $F_*=(p_2')_*(p_1')^{-1}_*(\mu_{X^{op}})_*(\mu_X)_*^{-1}: H_n(X)\to H_n(Y)$ coincides with $(p_2)_*(p_1)^{-1}_*$.
\end{proof}

\section{A fixed point theorem}
\label{secfixedpointthm}

We now turn our attention to proving the Lefschetz fixed point theorem for multivalued maps on finite topological spaces. For this, suppose that~$X$ is a finite~$T_0$ space and that $F : X \multimap X$ is a multivalued map. Recall that a point $x \in X$ is said to be a fixed point of~$F$ if we have $x \in F(x)$. In our main theorem of this section, we will show that if~$F$ satisfies the assumptions of Definition~\ref{def:lefschetz} and if~$L(f) \neq 0$, then~$F$ has a fixed point. First, however, we need an auxiliary result.

Recall that an acyclic carrier from a simplicial complex~$K$ to another complex~$L$ is a function~$\Phi$ which assigns an acyclic subcomplex~$\Phi (\sigma) \subseteq L$ to each simplex $\sigma \in K$ in such a way that $\sigma \subseteq \sigma'$ implies $\Phi (\sigma ) \subseteq \Phi (\sigma')$. 
Note that by viewing the collection of simplices of a finite simplicial complex $K$ as a finite topological space $\X(K)$ with its topology induced by the face relation, we may interpret an acyclic carrier between finite complexes $K$ and $L$ as a susc multivalued map $\X(K)\multimap \X(L)$ with open and acyclic values. 

The Acyclic Carrier Theorem (\cite[Theorem 13.3]{Mun}) says that if~$\Phi$ is an acyclic carrier from~$K$ to~$L$, there exists a chain map $\varphi : C_*(K) \to C_*(L)$ such that for every oriented $n$-simplex $\sigma \in K$ the chain $\varphi (\sigma)$ lies in $C_n(\Phi (\sigma))$. Any such map is said to be carried by~$\Phi$. Two chain maps carried by the acyclic carrier~$\Phi$ are always chain homotopic. If we have $L=K$, then we can define the Lefschetz number~$L(\Phi)$ of~$\Phi$ as the Lefschetz number of any chain map $\varphi : C_*(K) \to C_*(K)$ carried by~$\Phi$. By the Hopf trace theorem the Lefschetz number of the chain map~$\varphi$ coincides with $\sum_{n \ge 0}(-1)^n \tr(\varphi_n)$, where as usual $\varphi_n : C_n(K) \to C_n(K)$.

\begin{lema}(Lefschetz fixed point theorem for acyclic carriers) \label{lac}
Let~$K$ be a finite simplicial complex and let~$\Phi$ be an acyclic carrier from~$K$ to itself. If $L(\Phi) \neq 0$, then there exists a simplex $\sigma \in K$ which is contained in $\Phi (\sigma)$.
\end{lema}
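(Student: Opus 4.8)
The plan is to reduce this statement to the classical Lefschetz fixed point theorem for chain maps via the Hopf trace theorem, and then extract geometric information from the non-vanishing trace. First I would invoke the Acyclic Carrier Theorem (\cite[Theorem 13.3]{Mun}): since $\Phi$ is an acyclic carrier from $K$ to $K$, there exists a chain map $\varphi : C_*(K) \to C_*(K)$ carried by $\Phi$, meaning that for every oriented $n$-simplex $\sigma$ the chain $\varphi(\sigma)$ lies in $C_n(\Phi(\sigma))$. By definition $L(\Phi) = \sum_{n \ge 0}(-1)^n \tr(\varphi_n)$, and by the Hopf trace theorem this equals $\sum_{n \ge 0}(-1)^n \tr(\varphi_{*,n})$, where $\varphi_{*,n} : H_n(K) \to H_n(K)$ is the map induced on homology. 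Hence the hypothesis $L(\Phi) \neq 0$ is equivalent to $\sum_{n \ge 0}(-1)^n \tr(\varphi_n) \neq 0$ at the chain level.

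The key observation is that the chain-level trace can only be nonzero if some diagonal entry is nonzero. Concretely, fix for each dimension $n$ the basis of $C_n(K)$ given by the oriented $n$-simplices. Then $\tr(\varphi_n) = \sum_\sigma \langle \varphi(\sigma), \sigma \rangle$, where $\langle \varphi(\sigma), \sigma \rangle$ denotes the coefficient of $\sigma$ in the chain $\varphi(\sigma)$ (with a chosen orientation of each simplex). Since $L(\Phi) \neq 0$, at least one such diagonal coefficient $\langle \varphi(\sigma), \sigma \rangle$ must be nonzero; otherwise every $\tr(\varphi_n)$ would vanish and the alternating sum would be $0$, contradicting the hypothesis.

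Now I would translate this into the desired conclusion. If $\langle \varphi(\sigma), \sigma \rangle \neq 0$ for some oriented $n$-simplex $\sigma$, then $\sigma$ appears with nonzero coefficient in the chain $\varphi(\sigma) \in C_n(\Phi(\sigma))$. But a chain in $C_n(\Phi(\sigma))$ is by definition a linear combination of the $n$-simplices that belong to the subcomplex $\Phi(\sigma)$, so every simplex occurring in $\varphi(\sigma)$ with nonzero coefficient must lie in $\Phi(\sigma)$. In particular $\sigma \in \Phi(\sigma)$, which is exactly the asserted fixed-simplex property.

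The main obstacle, and the point that warrants care, is the contrapositive argument in the second paragraph: one must be sure that the statement ``every diagonal coefficient vanishes $\Rightarrow$ $L(\Phi) = 0$'' is valid with the correct bases, and that the carried-chain condition $\varphi(\sigma) \in C_n(\Phi(\sigma))$ genuinely forces the support of $\varphi(\sigma)$ into the simplices of $\Phi(\sigma)$. Both are essentially bookkeeping, but they hinge on using the \emph{same} simplex basis both to compute the trace and to read off membership in the subcomplex $\Phi(\sigma)$; once the bases are aligned, the argument is immediate and requires no homotopy-invariance beyond the Hopf trace theorem already quoted.
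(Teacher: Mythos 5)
Your proposal is correct and follows essentially the same route as the paper: take a chain map $\varphi$ carried by $\Phi$, use the Hopf trace theorem to pass from $L(\Phi)\neq 0$ to a nonzero chain-level trace $\tr(\varphi_n)$, extract an oriented simplex $\sigma$ appearing in the support of $\varphi(\sigma)$, and conclude $\sigma\in\Phi(\sigma)$ from $\varphi(\sigma)\in C_n(\Phi(\sigma))$. Your added care about aligning the simplex basis for the trace computation with the reading of membership in the subcomplex is exactly the bookkeeping the paper's proof leaves implicit.
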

\begin{proof}
Let $\varphi:C_*(K)\to C_*(K)$ be a chain map which is carried by~$\Phi$. According to our hypothesis there exists at least one number $n\ge 0$ such that $\tr (\varphi_n)\neq 0$ and, in particular, there is an oriented $n$-simplex~$\sigma$ such that $\varphi(\sigma)\in C_n(K)$ is a chain which contains~$\sigma$ in its support. Since $\varphi (\sigma) \in C_n(\Phi(\sigma))$, we deduce that $\sigma \in \Phi(\sigma)$.
\end{proof}

For now, we restrict our attention to multivalued maps which are $\susc$. Thus, let~$X$ be a finite~$T_0$ space and let $F:X\multimap X$ be a $\susc$ multivalued map with acyclic values. We define an acyclic carrier~$\Phi_F$ from the order complex~$\K(X)$ to itself by setting
\begin{equation} \label{eqn:PhiF}
  \Phi_F(\sigma) = \K(F(\max{\sigma}))
  \quad\mbox{ for all }\quad
  \sigma \in \K(X) \; .
\end{equation}
Note that due to the acyclicity assumption, the map~$\Phi_F$ is indeed an acyclic carrier.

\begin{prop} \label{triangulo}
Let~$X$ be a finite~$T_0$ space and let $F:X\multimap X$ be a $\susc$ multivalued map with acyclic values. Then we have $L(F)=L(\Phi_F)$, if~$\Phi_F$ is defined as in~(\ref{eqn:PhiF}).
\end{prop}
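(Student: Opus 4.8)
The plan is to compute both Lefschetz numbers through homology and to show that the relevant endomorphisms are conjugate via the McCord isomorphism $(\mu_X)_*$. By the Hopf trace theorem recalled before Lemma~\ref{lac}, $L(\Phi_F)=\sum_{n\ge 0}(-1)^n\tr(\varphi_{*n})$ for \emph{any} chain map $\varphi:C_*(\K(X))\to C_*(\K(X))$ carried by $\Phi_F$, where $\varphi_{*}:H_n(\K(X))\to H_n(\K(X))$ is the induced map. So it suffices to exhibit one chain map carried by $\Phi_F$ whose induced homology map is conjugate to $F_*$ under $(\mu_X)_*:H_n(\K(X))\to H_n(X)$, since conjugate endomorphisms have equal traces in each degree after quotienting by torsion.

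To build such a chain map, I would first recall from the proof of Lemma~\ref{lem:inducedhommap} that for every $x\in X$ the poset $p_1^{-1}(\opn{x})\subseteq F$ is acyclic, being homotopy equivalent to $F(x)$. Hence the assignment $\Gamma(\sigma)=\K(p_1^{-1}(\opn{\max\sigma}))$ defines an acyclic carrier from $\K(X)$ to $\K(F)$: it has acyclic values since $\K$ preserves the homology of finite spaces, and it is monotone because $\sigma\subseteq\sigma'$ forces $\max\sigma\le\max\sigma'$, hence $\opn{\max\sigma}\subseteq\opn{\max\sigma'}$. Let $g:C_*(\K(X))\to C_*(\K(F))$ be a chain map carried by $\Gamma$, and let $\K(p_i)_\#$ denote the chain map induced by the simplicial map $\K(p_i)$. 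Then $\K(p_1)_\#\,g$ and the identity are both carried by the acyclic carrier $\sigma\mapsto\K(\opn{\max\sigma})$ (the identity because every vertex of $\sigma$ lies in $\opn{\max\sigma}$, and $\K(p_1)_\#\,g$ because $\K(p_1)(\Gamma(\sigma))\subseteq\K(\opn{\max\sigma})$), so by uniqueness up to chain homotopy $\K(p_1)_\#\,g\simeq\mathrm{id}$. On homology this gives $g_*=(\K(p_1))_*^{-1}$, using that $(\K(p_1))_*$ is an isomorphism.

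Next I would set $\varphi=\K(p_2)_\#\,g$. The decisive point is that $\varphi$ is carried by $\Phi_F$: since $g(\sigma)\in C_*(\K(p_1^{-1}(\opn{\max\sigma})))$, the chain $\varphi(\sigma)$ lies in the chains on $\K(p_2)(\K(p_1^{-1}(\opn{\max\sigma})))=\K\bigl(p_2(p_1^{-1}(\opn{\max\sigma}))\bigr)$. Here strong upper semicontinuity enters through Lemma~\ref{lem:sscft0s}: $z\le\max\sigma$ implies $F(z)\subseteq F(\max\sigma)$, whence $p_2(p_1^{-1}(\opn{\max\sigma}))=\bigcup_{z\le\max\sigma}F(z)=F(\max\sigma)$, so $\varphi(\sigma)\in C_*(\K(F(\max\sigma)))=C_*(\Phi_F(\sigma))$. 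Finally, naturality of the McCord maps gives $p_i\circ\mu_F=\mu_X\circ\K(p_i)$, hence $(\K(p_i))_*=(\mu_X)_*^{-1}(p_i)_*(\mu_F)_*$; substituting yields $\varphi_*=(\K(p_2))_*(\K(p_1))_*^{-1}=(\mu_X)_*^{-1}(p_2)_*(p_1)_*^{-1}(\mu_X)_*=(\mu_X)_*^{-1}F_*(\mu_X)_*$. Thus $\varphi_*$ and $F_*$ are conjugate, and $L(\Phi_F)=\sum(-1)^n\tr(\varphi_{*n})=\sum(-1)^n\tr(F_n)=L(F)$.

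The main obstacle is precisely the middle step: manufacturing an explicit chain map carried by $\Phi_F$ that simultaneously realizes $F_*$ on homology. This is what forces the introduction of the intermediary carrier $\Gamma$ into $\K(F)$ and the verification, via the identity $p_2(p_1^{-1}(\opn{\max\sigma}))=F(\max\sigma)$, that post-composing with $\K(p_2)$ lands inside $\Phi_F$. That identity is exactly the place where strong (rather than merely ordinary) upper semicontinuity is indispensable.
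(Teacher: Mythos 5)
Your proof is correct, and it reaches the conclusion by a genuinely different mechanism than the paper's, though the two arguments are parallel in spirit. The paper starts from an \emph{arbitrary} chain map $\varphi$ carried by $\Phi_F$ and compares the two compositions $\varphi\,\K(p_1)_\#$ and $\K(p_2)_\#$ out of $C_*(\K(F))$: both are carried by a single acyclic carrier $\Lambda$ on $\K(F)$, defined by $\Lambda(\sigma)=\K(F(x_n))$ where $(x_n,y_n)$ is the top element of $\sigma$, so the acyclic carrier theorem forces $\varphi_*\K(p_1)_*=\K(p_2)_*$ and hence $\varphi_*=\K(p_2)_*\K(p_1)_*^{-1}$. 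You instead run the auxiliary carrier in the opposite direction: your $\Gamma(\sigma)=\K(p_1^{-1}(\opn{\max\sigma}))$ sends simplices of $\K(X)$ into $\K(F)$, the chain map $g$ it carries is a chain-homotopy section of $\K(p_1)_\#$, and you then check that the \emph{specific} composite $\K(p_2)_\#\,g$ is carried by $\Phi_F$. Strong upper semicontinuity enters at the corresponding spot in each proof: for the paper it makes $\K(p_2)_\#$ carried by $\Lambda$; for you it gives $p_2(p_1^{-1}(\opn{\max\sigma}))=F(\max\sigma)$. Both proofs invoke the acyclic carrier theorem twice (existence plus uniqueness up to chain homotopy), need Lemma \ref{lem:inducedhommap} with McCord naturality to invert $\K(p_1)_*$, and end with the identical conjugation $\varphi_*=(\mu_X)_*^{-1}F_*(\mu_X)_*$. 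What your route buys is constructiveness: you manufacture a carried chain map whose homology map is visibly conjugate to $F_*$, rather than identifying what an unspecified one must do; since all chain maps carried by $\Phi_F$ are chain homotopic, that single computation determines $L(\Phi_F)$. The paper's route is slightly leaner, needing no section of $\K(p_1)_\#$ and only one carrier verification. One point to make explicit: the invertibility of $(\K(p_1))_*$, which you use to pass from $(\K(p_1))_*g_*=\mathrm{id}$ to $g_*=(\K(p_1))_*^{-1}$, does not follow from that identity alone; it comes from Lemma \ref{lem:inducedhommap} conjugated by the McCord equivalences, exactly as in the paper.
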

\begin{proof}
Let $\varphi:C_*(\K(X))\to C_*(\K(X))$ be a chain map carried by $\Phi_F$. Once again we identify $F$ with a subspace of $X\times X$. We will show that the following triangle of chain maps commutes up to chain homotopy.
\begin{displaymath}
\xymatrix{ C_*(\K(F)) \ar^{\K(p_2)_*}[r] \ar_{\K(p_1)_*}[d] & C_*(\K(X)) \\
C_*(\K(X)) \ar_{\varphi}[ur] &  }
\end{displaymath}

\medskip

Define an acyclic carrier $\Lambda : \K(F) \to \K(X)$ as follows. Let~$\sigma$ be a simplex of~$\K(F)$, i.e., a chain $(x_0,y_0)<(x_1,y_1)<\ldots <(x_n,y_n)$ such that $y_i\in F(x_i)$ for $i=0,\ldots,n$. Then we define $\Lambda(\sigma)=\K(F(x_n))$. Since we assumed that the multivalued map~$F$ is $\susc$ with acyclic values, the map~$\Lambda$ is an acyclic carrier.

Suppose now that $\sigma=[(x_0,y_0),(x_1,y_1),\ldots,(x_n,y_n)]$ is an oriented simplex of~$\K(F)$, where $(x_0,y_0)<(x_1,y_1)<\ldots <(x_n,y_n)$. Note that the chain $\K(p_2)_*(\sigma)$ is the oriented simplex $[y_0,y_1,\ldots ,y_n]$ if all the~$y_i$ are different, and~$0$ if two of them are equal. Since~$F$ is $\susc$, we immediately obtain $y_i\in F(x_i) \subseteq F(x_n)$ for every $i = 0,\ldots,n$, and therefore the inclusion $\K(p_2)_*(\sigma) \in C_*(\K(F(x_n))) = C_*(\Lambda(\sigma))$ is satisfied. This implies that~$\K(p_2)_*$ is carried by~$\Lambda$. On the other hand, the chain~$\K(p_1)_*(\sigma)$ is either equal to $[x_0,x_1,\ldots,x_n]$, or it is~$0$. Thus, we have $\varphi \K(p_1)_*(\sigma)\in C_*(\K(F(x_n)))$. Hence, $\varphi \K(p_1)_*$ is also carried by~$\Lambda$. On the level of homology the acyclic carrier theorem then implies the equality
\begin{displaymath}
  \varphi_* \K(p_1)_* = \K(p_2)_* :
  H_n(\K(F)) \to H_n(\K(X))
  \quad\mbox{ for every }\quad
  n\ge 0 \; .
\end{displaymath}
This in turn yields the identity $\varphi_*=\K(p_2)_*\K(p_1)_*^{-1}$. By McCord's theorem, $\K(p_2)_*\K(p_1)_*^{-1}$ and $(p_2)_*(p_1)_*^{-1}:H_n(X)\to H_n(X)$ merely differ in a conjugation by an isomorphism. This finally implies the equality $L(\Phi_F)=L(\varphi)=L(F)$.
\end{proof}

After these preparations we can now deduce the Lefschetz fixed point theorem for strongly semicontinuous multivalued maps with acyclic values between finite~$T_0$ spaces.

\begin{teo}[Lefschetz fixed point theorem] \label{main}
Let~$X$ be an arbitrary finite~$T_0$ space and let $F:X\multimap X$ be a multivalued map with acyclic values which is $\susc$ or $\slsc$. If the inequality $L(F) \neq 0$ holds, then~$F$ has a fixed point.
\end{teo}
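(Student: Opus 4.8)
The plan is to reduce the general $\susc$/$\slsc$ statement to the case handled by the machinery already assembled, and then to transport the fixed-point conclusion from the order complex back to the space $X$ itself. I would proceed by first treating the $\susc$ case directly, and then deducing the $\slsc$ case by dualization.

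\medskip

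For the $\susc$ case, let $F:X\multimap X$ be $\susc$ with acyclic values. By Proposition~\ref{triangulo} we have $L(F)=L(\Phi_F)$, where $\Phi_F$ is the acyclic carrier on $\K(X)$ defined by $\Phi_F(\sigma)=\K(F(\max \sigma))$. Since $L(F)\neq 0$ by hypothesis, we have $L(\Phi_F)\neq 0$, so Lemma~\ref{lac} (the Lefschetz fixed point theorem for acyclic carriers) produces a simplex $\sigma\in\K(X)$ with $\sigma\subseteq\Phi_F(\sigma)=\K(F(\max\sigma))$. Write $\sigma$ as a chain with maximal vertex $x=\max\sigma$. The containment $\sigma\subseteq\K(F(x))$ forces every vertex of $\sigma$ to be a vertex of $\K(F(x))$; in particular the vertex $x=\max\sigma$ itself lies in $F(x)$, since the vertices of $\K(F(x))$ are exactly the points of $F(x)$. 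Therefore $x\in F(x)$, which is precisely the statement that $x$ is a fixed point of $F$. This completes the $\susc$ case.

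\medskip

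For the $\slsc$ case, I would exploit the duality already established in Section~\ref{secsc}. If $F:X\multimap X$ is $\slsc$ with acyclic values, then the map $F^{op}:X^{op}\multimap X^{op}$ coinciding with $F$ on underlying sets is $\susc$ with acyclic values (note acyclicity is a property of the underlying homology, unaffected by passing to the opposite order since $\K(F(x))=\K(F(x)^{op})$). By Lemma~\ref{lslsc} we have $L(F)=L(F^{op})$, so the hypothesis $L(F)\neq 0$ gives $L(F^{op})\neq 0$. Applying the $\susc$ case already proved to $F^{op}$ yields a point $x\in X^{op}$ with $x\in F^{op}(x)$. Since $F^{op}$ and $F$ agree on underlying sets, this is the same as $x\in F(x)$, so $x$ is a fixed point of $F$. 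This reduction is clean precisely because the notion of fixed point, $x\in F(x)$, is order-independent.

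\medskip

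The main obstacle—and the only genuinely nontrivial point—is the passage from a carried simplex back to an honest fixed point in the $\susc$ case, namely the observation that $\sigma\subseteq\K(F(x))$ with $x=\max\sigma$ forces $x\in F(x)$. This hinges on two facts that must be invoked carefully: first, that the vertices of the order complex $\K(F(x))$ are exactly the elements of $F(x)$, so membership of $x$ as a \emph{vertex} of $\sigma$ in $\K(F(x))$ literally means $x\in F(x)$; and second, that $\max\sigma$ is indeed a vertex of $\sigma$, which is automatic since $\sigma$ is a finite chain in the poset and hence has a maximum among its vertices. Everything else is an application of results proved earlier in the paper (Proposition~\ref{triangulo}, Lemma~\ref{lac}, and Lemma~\ref{lslsc}), so the proof is short once these two combinatorial observations are in place.
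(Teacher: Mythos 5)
Your proof is correct and follows essentially the same route as the paper's: the $\susc$ case via Proposition~\ref{triangulo} and Lemma~\ref{lac}, extracting the fixed point $\max(\sigma)\in F(\max(\sigma))$ from the carried simplex, and the $\slsc$ case by passing to $F^{op}$ via Lemma~\ref{lslsc}. Your additional care in verifying that vertices of $\K(F(x))$ are exactly the points of $F(x)$, and that acyclicity survives passage to the opposite order, only makes explicit what the paper leaves implicit.
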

\begin{proof}
We first assume that~$F$ is $\susc$. By the previous proposition, one obtains $L(\Phi_F)\neq 0$, and by Lemma~\ref{lac}, there exists a simplex $\sigma \in \K(X)$ such that $\sigma \in \Phi_F(\sigma) = \K(F(\max (\sigma)))$. This furnishes in particular the inclusion $\max (\sigma)\in F(\max (\sigma))$.

If on the other hand~$F$ is $\slsc$, by Lemma \ref{lslsc} $L(F^{op})=L(F)\neq 0$ and by the case already proved $F^{op}:X^{op}\multimap X^{op}$ has a fixed point, and so does $F$.
\end{proof}

If $f:X \to X$ is a continuous single-valued map on a finite~$T_0$ space, then one can define an associated multivalued map $F_f:X\multimap X$ by letting $F_f(x) = U_{f(x)}$. This map is clearly $\susc$. Moreover, since for each $x \in X$ the image~$F_f(x)$ has a maximum, the map~$F_f$ has acyclic values. The Lefschetz numbers of~$f$ and~$F_f$ coincide since the chain map $\K(f)_* : C_*(\K(X)) \to C_*(\K(X))$ is carried by~$\Phi_{F_f}$. Moreover, if~$F_f$ has a fixed point, say $x \in X$, then $x \in U_{f(x)}$, so $x \lee f(x)$. In this case it is easy to see that~$f$ also has to have a fixed point. Since the map~$f$ is order-preserving we have a chain $x \lee f(x) \lee f^2(x) \lee \ldots$, and by the finiteness of~$X$ there exists an $n \ge 0$ such that~$f^n(x)$ is a fixed point of~$f$. In particular, Theorem~\ref{main} implies that any continuous single-valued map with non-trivial Lefschetz number has a fixed point. Baclawski and Bj\"orner show in~\cite[Theorem 1.1]{BB} that in fact~$L(f)$ is the Euler characteristic~$\chi (X^f)$ of the fixed point set. The analogue for multivalued maps is not true as the next example shows.

\begin{ej}
Let~$X$ be the finite model of~$S^1$ which has already been considered in Figure~\ref{example1}. Let $F:X\multimap X$ be defined by $F(a) = F(b) = \{a,b,c\}$, $F(c) = \{a\}$, as well as $F(d) = \{b\}$. This multivalued map is $\susc$ with acyclic values. The subspace~$X^F$ of fixed points is the discrete space of two points $\{a,b\}$, and therefore we have $\chi(X^F) = 2$. On the other hand, the image of~$F$ is the contractible space~$\{a,b,c\}$. Therefore, $p_2 : F \to X$ is null-homotopic, which in turn implies~$L(F)=1$.
\end{ej}

A selector of a multivalued map $F : X \multimap Y$ is a single-valued map $f : X \to Y$ such that $f(x) \in F(x)$ for every $x \in X$. We would like to point out that a given $\susc$ multivalued map $F : X \multimap X$ with acyclic values may not have a continuous selector, even if each value~$F(x)$ is a contractible finite space. This is the case for the map studied in Example~\ref{ej1}. However, the following result holds for $\susc$ maps, and it can easily be adapted to the $\slsc$ case as well.

\begin{prop}[Order complex selector] \label{induced}
If~$X$ is a finite~$T_0$ space and $F : X \multimap X$ is a $\susc$ multivalued map such that~$F(x)$ is weakly contractible for every $x \in X$, then there exists a continuous map $f : \K(X) \to \K(X)$ such that for every simplex $\sigma \in \K (X)$ we have $f(\sigma) \subseteq \K(F(\max (\sigma)))$. The Lefschetz number of any such map~$f$ is~$L(F)$ and, furthermore, if~$f$ has a fixed point, then so does~$F$.
\end{prop}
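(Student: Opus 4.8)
The plan is to establish the three assertions in turn: existence of $f$, the identity $L(f)=L(F)$, and the transfer of fixed points. Throughout I will use that strong upper semicontinuity makes the assignment $\sigma\mapsto\K(F(\max\sigma))$ monotone with respect to the face relation: if $\sigma$ is a face of $\sigma'$, then $\max\sigma\le\max\sigma'$ in $X$, so by Lemma~\ref{lem:sscft0s} we have $F(\max\sigma)\subseteq F(\max\sigma')$ and hence $\K(F(\max\sigma))\subseteq\K(F(\max\sigma'))$. The crucial preliminary observation is that each target piece $\K(F(x))$ is in fact \emph{contractible}, not merely weakly contractible: since $F(x)$ is weakly contractible, Theorem~\ref{mccord} shows that $\mu_{F(x)}:\K(F(x))\to F(x)$ is a weak homotopy equivalence, so the polyhedron $\K(F(x))$ has trivial homotopy groups, and a weakly contractible CW complex is contractible by Whitehead's theorem. (Note that Whitehead applies here because $\K(F(x))$ is an honest simplicial complex, even though the analogous statement fails for finite spaces themselves.)

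For the existence of $f$ I would proceed by induction on the skeleta of $\K(X)$. On vertices, a vertex is a point $x\in X$ with $\max\{x\}=x$, and I choose $f(x)$ to be any point of the nonempty complex $\K(F(x))$. Assuming $f$ has been defined on the $(n-1)$-skeleton so that $f(\tau)\subseteq\K(F(\max\tau))$ for every simplex $\tau$ of dimension less than $n$, let $\sigma$ be an $n$-simplex. By the monotonicity above, each proper face $\tau$ of $\sigma$ satisfies $f(\tau)\subseteq\K(F(\max\tau))\subseteq\K(F(\max\sigma))$, so the already-defined map sends the boundary sphere $\partial\sigma$ into the contractible complex $\K(F(\max\sigma))$. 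Such a map of $\partial\sigma\cong S^{n-1}$ is null-homotopic and therefore extends over $\sigma\cong D^n$ with image still inside $\K(F(\max\sigma))$. This produces a continuous $f:\K(X)\to\K(X)$ with $f(\sigma)\subseteq\K(F(\max\sigma))$ for every simplex $\sigma$.

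For the equality $L(f)=L(F)$ I would connect $f$ with the chain-level acyclic carrier $\Phi_F(\sigma)=\K(F(\max\sigma))$ of Proposition~\ref{triangulo}, which already gives $L(\Phi_F)=L(F)$. Fix a chain map $\varphi$ carried by $\Phi_F$. Choosing a simplicial approximation $g$ of $f$ on a sufficiently fine iterated subdivision $\mathrm{sd}^N\K(X)$, the composite $g_\#\circ\mathrm{Sd}^N$ of the induced chain map with the subdivision operator is a chain map $C_*(\K(X))\to C_*(\K(X))$ that is again carried by $\Phi_F$: for a simplex $\sigma$, the chain $\mathrm{Sd}^N(\sigma)$ is supported on subsimplices of $\sigma$, and since $f$ maps $\sigma$ into $\K(F(\max\sigma))$ while a simplicial approximation sends each point into the smallest simplex of $\K(X)$ containing its $f$-image, $g$ carries every such subsimplex into $\Phi_F(\sigma)$. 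By the uniqueness clause of the Acyclic Carrier Theorem, $g_\#\mathrm{Sd}^N$ is chain homotopic to $\varphi$; passing to homology and using that $g\simeq f$ and that subdivision induces the identity, this yields $f_*=\varphi_*$, whence $L(f)=L(\varphi)=L(\Phi_F)=L(F)$. I expect this step to be the main obstacle, since it requires carefully threading simplicial approximation and the subdivision operator through the chain-level carrier so as to land exactly in the uniqueness statement for $\Phi_F$.

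Finally, for the fixed point transfer, suppose $f(p)=p$ for some $p\in\K(X)$, and let $\sigma$ be the carrier of $p$, that is, the unique simplex whose open simplex contains $p$. Then $p=f(p)\in f(\sigma)\subseteq\K(F(\max\sigma))$. Since $\K(F(\max\sigma))$ is a subcomplex of $\K(X)$ and $\sigma$ is the smallest simplex of $\K(X)$ containing $p$, it follows that $\sigma$ is itself a simplex of $\K(F(\max\sigma))$, i.e. a chain in $F(\max\sigma)$. In particular its maximal vertex satisfies $\max\sigma\in F(\max\sigma)$, so $x=\max\sigma$ is a fixed point of $F$.
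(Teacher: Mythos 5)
Your proposal is correct, and its overall architecture coincides with the paper's: both arguments build $f$ from the contractibility of the complexes $\K(F(x))$, both prove $L(f)=L(F)$ by comparing a simplicial approximation of $f$ against a chain map $\varphi$ carried by $\Phi_F$ via the Acyclic Carrier Theorem, and the fixed-point transfer at the end is word-for-word the paper's argument. There are two genuine, if modest, differences. For existence, the paper simply invokes Walker's contractible carrier theorem \cite[Lemma 2.1]{Wal}, whereas you reprove it by induction on skeleta, together with the explicit McCord--Whitehead observation that weak contractibility of $F(x)$ makes the polyhedron $\K(F(x))$ honestly contractible (a point the paper uses implicitly); your version is self-contained, the paper's is shorter. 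For the Lefschetz number, the paper introduces an auxiliary acyclic carrier $\Theta$ from the subdivision $\K(X)'$ to $\K(X)$, defined by $\Theta(\tau)=\K(F(\max\sigma))$ with $\sigma$ the smallest simplex of $\K(X)$ containing $\tau$, plus a simplicial approximation $\rho$ of the identity, and shows that the chain maps of $\psi$ and of $\varphi\circ\rho$ are both carried by $\Theta$; you instead verify directly that the composite of the approximation's chain map with the subdivision operator is carried by $\Phi_F$ itself, which eliminates both $\Theta$ and $\rho$. Your verification of that carrier condition is sound: a subdivision simplex $\tau$ lies in $|\sigma|$, the map $f$ sends $|\sigma|$ into the full subcomplex $\K(F(\max\sigma))$, and a simplicial approximation sends each point into the carrier of its $f$-image, hence into that subcomplex. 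Since both proofs ultimately rest on the same fact from \cite[Theorem 17.2]{Mun} --- that $f_*$ is induced by an approximation composed with the subdivision operator --- the two routes deliver the same conclusion with comparable effort; yours is slightly leaner in auxiliary carriers, while the paper's keeps every chain map carried by a carrier defined simplex-by-simplex on its actual domain.
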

\begin{proof}
The existence of~$f$ is guaranteed by Walker's contractible carrier theorem~\cite[Lemma 2.1]{Wal}, since~$\K(F(x))$ is contractible for every $x \in X$. In addition, if~$f$ has a fixed point~$\alpha$ which lies in an open simplex $\sigma \in \K(X)$, then the closed simplex~$\sigma$ is contained in the subcomplex $\K(F(\max (\sigma)))$. In particular $\max (\sigma)$ is a fixed point of~$F$.

It remains to show that the Lefschetz number of~$f$ equals the Lefschetz number of~$F$. For this, let $\psi:\K(X)'\to \K(X)$ be a simplicial approximation of~$f$, where~$\K(X)'$ is a subdivision of~$\K(X)$. The maps $f_*:H_n(\K(X))\to H_n(\K(X))$ are the homomorphisms induced by the chain map $\psi_* \lambda  : C_*(\K(X))\to C_*(\K(X))$, where $\psi_*:C_*(\K(X)')\to C_*(\K(X))$ is the chain map induced by~$\psi$ and $\lambda :C_*(\K(X))\to C_*(\K(X)')$ is the subdivision operator~\cite[Theorem~17.2]{Mun}. Let $\rho:\K(X)'\to \K(X)$ be a simplicial approximation to the identity and let $\varphi : C_*(\K(X))\to C_*(\K(X))$ be a chain map carried by~$\Phi_F$. Define one last acyclic carrier~$\Theta$ from~$\K(X)'$ to~$\K(X)$ by $\Theta(\tau) = \K(F(\max(\sigma)))$, where $\sigma \in \K(X)$ is the simplex of smallest dimension containing~$\tau$. It is clear that~$\Theta$ is an acyclic carrier. Both~$\psi_*$ and~$\varphi_*\rho_*$ are carried by this acyclic carrier~$\Theta$. Indeed, if $\tau \in \K(X)'$ and~$\alpha$ is a point in the open simplex~$\overset{\circ}{\tau}$, then $\alpha \in \overset{\circ}{\sigma}$, since by construction $\sigma \in \K(X)$ denotes the smallest simplex containing~$\tau$. This yields the inclusion $f(\alpha) \in \K(F(\max(\sigma)))$. Since~$\psi$ is an approximation of~$f$, the image~$\psi (\alpha)$ lies in the same subcomplex, and therefore we obtain the inclusion $\psi(\tau) \in \K(F(\max(\sigma))) = \Theta(\tau)$. Thus, the map~$\psi_*$ is carried by~$\Theta$. On the other hand, the inclusion $\rho (\tau) \subseteq \sigma$ holds, and therefore $\varphi_* \rho_*(\tau)\in C_*(\Phi_F(\sigma)) = C_*(\Theta(\tau))$. According to the acyclic carrier theorem we have $\psi_* \simeq \varphi_* \rho_*$, which in turn implies $\psi_*\lambda\simeq \varphi_*\rho_*\lambda\simeq \varphi_*$. This proves that on the level of homology one has $f_* = F_*:H_n(\K(X))\to H_n(\K(X))$, and establishes in particular the equality $L(f) = L(F)$.
\end{proof}

For any continuous map $f:X\to X$ on a finite~$T_0$ space~$X$ we have earlier defined the multivalued map $F_f:X\multimap X$ which maps $x$ to $U_{f(x)}$. Moreover, we have shown that if~$F_f$ has a fixed point, then so does~$f$. Of course, the map~$f$ is a selector of the map~$F$ in this case. In general, however, it is not true that if a $\susc$ multivalued map $F:X\multimap X$ with acyclic values has a fixed point, then every continuous selector has a fixed point. This is illustrated in the following example.

\begin{ej} \label{corona}
Let~$X$ denote the $6$-point space depicted in Figure~\ref{example2}. Consider the multivalued map $F : X\multimap X$ defined by $F(a')=\{b'\}$, $F(b')=\{c'\}$, $F(c')=\{a'\}$, $F(b)=\{b',c,c'\}$, $F(c)=\{c',a,a'\}$, as well as $F(a)=\{c',a,a',b,b'\}$. One can easily verify that the map~$F$ is $\susc$ with acyclic values, and that the point~$a$ is a fixed point.
\begin{figure}[tb]
\begin{center}
\includegraphics[scale=0.6]{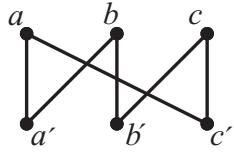}
\caption{A space with a multivalued map with a fixed point, and which has a selector without fixed points. For more details, see Example~\ref{corona}.}\label{example2}
\end{center}
\end{figure}
There exists a unique continuous selector $f:X\to X$, $a\mapsto b$, $a' \mapsto b'$, $b \mapsto c$, $b' \mapsto c'$, $c \mapsto a$, $c' \mapsto a'$, which is in fact fixed point free.
\end{ej}

\section{The fixed point property for multivalued maps}
\label{secfpp}

A topological space~$X$ is said to have the fixed point property (FPP) if every continuous self map $X\to X$ has a fixed point. By the Lefschetz fixed point theorem for finite spaces~\cite[Theorem~1.1]{BB}, if a connected finite~$T_0$ space has trivial rational homology groups~$H_n(X;\mathbb{Q})$ for every $n\ge 1$ (in which case it is called a rationally acyclic space), then it has the FPP. The FPP is a homotopy invariant of finite~$T_0$ spaces, but it is not a weak homotopy invariant. Bj\"orner and Baclawski found in~\cite{BB} examples of finite spaces which are weakly homotopy equivalent to~$S^2$ and which have the FPP. Later, in~\cite{Bar3}, it was proved that for any compact CW-complex there exists a finite space which is weakly homotopy equivalent and which has the FPP.

We will say that a finite~$T_0$ space has the \textit{fixed point property for multivalued maps} (MFPP) if every $\susc$ multivalued map $X\multimap X$ with acyclic values has a fixed point. It is clear that any finite~$T_0$ space~$X$ with the MFPP has the FPP for if $f:X\to X$ is a continuous map, then the multivalued map $F_f:X\multimap X$ defined in the last section has a fixed point, and this in turn implies that so does~$f$. On the other hand Theorem~\ref{main} implies that any rationally acyclic finite~$T_0$ space has the MFPP since any multivalued map $X\multimap X$ with acyclic values has Lefschetz number equal to~1. Therefore we have the following two implications

$$ \textrm{rationally acyclic } \Rightarrow MFPP \Rightarrow FPP.$$

\medskip

We will now show that both of these implications are in fact strict.

Recall that a beat point in a finite~$T_0$ space~$X$ is a point $x\in X$ which covers a unique element or it is covered by a unique element, i.e., either the set~$X_{< x}:=\{x'\in X: x'<x\}$ has a maximum or~$X_{> x}:=\{x'\in X: x'>x\}$ has a minimum. If~$x$ is a beat point of~$X$, then one can show that~$X$ and~$X\smallsetminus \{x\}$ are homotopy equivalent. In particular, if we can remove beat points one by one to obtain a singleton, then the original space is contractible. In fact the converse holds: If a finite~$T_0$ space is contractible, it is possible to remove beat points one by one to obtain just one point (\cite{Bar2,Sto}).

\begin{ej} \label{esfera}

Let~$X$ be the finite~$T_0$ space with Hasse diagram depicted in Figure~\ref{sphere}. The order complex~$\K (X)$ of~$X$ is homeomorphic to the $2$-dimensional sphere. This space has the FPP. Indeed, if a continuous map $f:X\to X$ is a homeomorphism, then it fixes the point~$a$ --- the unique maximal point which covers two elements. If  $f$ is not a homeomorphism, then it is not surjective. Hence, $\K (f):S^2\to S^2$ is null-homotopic and $L(f) = L(\K(f)) = 1$. This example is very similar to Example~2.4 in~\cite{BB}.
\begin{figure}[tb]
\begin{center}
\includegraphics[scale=0.6]{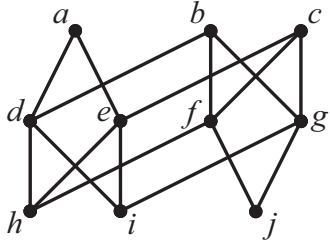}
\caption{A space with the FPP which does not have the MFPP. For more details see Example~\ref{esfera}.}\label{sphere}
\end{center}
\end{figure}

We now define a fixed point free multivalued $\susc$ map $F:X\multimap X$ with acyclic values. For this, let $F(x) = (X_{\ge x})^c = \{y\in X \ | \ y \ngeq x\}$. This map is clearly $\susc$ and, in this particular case, it is easy to check that each value~$F(x)$ is contractible, since beat points can be removed one by one to obtain a singleton.
Obviously, $F$ has no fixed point. Thus, $X$ has the FPP but not the MFPP.
\end{ej}

Recall that the standard complex~$K_{\PP}$ of a presentation $\PP = \langle x_1,x_2,\ldots , x_n | r_1,r_2,\ldots ,r_m\rangle$ of a group~$G$ is the $2$-dimensional CW-complex which has a unique $0$-dimensional cell~$e^0$, a $1$-dimensional cell~$e_i^1$ for each generator~$x_i$ and a $2$-dimensional cell~$e_j^2$ for each relator~$r_j$. The attaching map of~$e_j^2$ follows the $1$-cells corresponding to the letters which appear in~$r_j$ with the orientation given by the exponent of each letter. The fundamental group of~$K_{\PP}$ is isomorphic to~$G$. A subpresentation of~$\PP$ is a presentation~$\mathcal{Q}$ whose generators are generators of~$\PP$ and whose relators are relators of~$\PP$. If~$\mathcal{Q}$ is a subpresentation of~$\PP$, then~$K_{\mathcal{Q}}$ is a subcomplex of~$K_{\PP}$. The trivial presentation is the presentation $\langle | \rangle$ of the trivial group with no generators and no relators. We identify a presentation with its standard complex, so we will say that a presentation~$\PP$ is contractible if~$K_{\PP}$ is contractible, and so on.

If~$K$ is a simplicial complex, then~$\X(K)$ denotes the face poset of~$K$, i.e.,  the poset of simplices of~$K$ ordered by inclusion. Note that~$\K(\X(K))$ is the barycentric subdivision~$K'$ of~$K$.

\begin{prop} \label{pres}
Let $\PP=\langle x_1,x_2,\ldots , x_n | r_1,r_2,\ldots ,r_m\rangle$ be a presentation of a group such that the standard complex~$K_{\PP}$ has the FPP and no nontrivial subpresentation of~$\PP$ is acyclic. Then for any triangulation~$K$ of~$K_{\PP}$, the finite space~$\X(K)$ has the MFPP.
\end{prop}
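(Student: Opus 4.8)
The plan is to argue by contradiction: I would suppose that $\X(K)$ does not have the MFPP, produce from a fixed-point-free $\susc$ multivalued map a combinatorial structure on $K$, and then exploit the two hypotheses on $\PP$ (namely that $K_{\PP}$ has the FPP and that no nontrivial subpresentation is acyclic) to derive a contradiction. Suppose then that $F:\X(K)\multimap \X(K)$ is a $\susc$ multivalued map with acyclic values and no fixed point. By Theorem~\ref{main} the non-existence of a fixed point forces $L(F)=0$. Since $\X(K)$ is weak homotopy equivalent to $K$ (because $\K(\X(K))=K'$ is homeomorphic to $K=K_{\PP}$), the homology of $\X(K)$ agrees with that of the $2$-complex $K_{\PP}$. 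The idea is that a fixed-point-free $F$ on the face poset should act, at the level of homology, like a projection onto a proper acyclic subcomplex, and an acyclic subcomplex of $K_{\PP}$ built from cells of $\PP$ would correspond to an acyclic subpresentation, contradicting the second hypothesis.

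First I would use the acyclic carrier machinery already developed. By Proposition~\ref{triangulo}, $L(F)=L(\Phi_F)$, where $\Phi_F(\sigma)=\K(F(\max\sigma))$ is the acyclic carrier on $\K(\X(K))=K'$. A fixed point of $F$ is exactly a simplex $\tau\in K$ with $\tau\in F(\tau)$, so the hypothesis ``no fixed point'' says $\sigma\notin\Phi_F(\sigma)$ in an appropriate combinatorial sense; via Lemma~\ref{lac} the absence of such a simplex gives $L(\Phi_F)=0$, hence $L(F)=0$. Next I would locate, for each simplex $\sigma$, the acyclic subcomplex $\K(F(\max\sigma))$ inside $K'$ and analyze where the image of $F$ lies. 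The key point is to relate the induced map $F_\ast$ in homology with a single-valued self-map of the underlying complex $K_{\PP}$; here I would invoke the order-complex selector Proposition~\ref{induced} (valid because acyclic plus the low-dimensional setting lets one arrange weak contractibility of values, or else work directly with the carrier) to replace $F$ by a genuine continuous $f:K\to K$ with the same Lefschetz number $L(f)=L(F)=0$.

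The crux is then the following dichotomy for the single-valued map $f$. Since $K_{\PP}$ has the FPP and $f$ has no fixed point (as $F$ has none), $f$ cannot be homotopic to a map with nonzero Lefschetz number in the way the FPP would force; concretely, $L(f)=0$ together with the FPP must prevent $f$ from being a homotopy equivalence, so $f$ is not surjective up to homotopy. The image of $f$ should then be carried inside a proper subcomplex supported on a subcollection of the cells $e_i^1,e_j^2$, i.e., on the standard complex $K_{\Q}$ of a proper subpresentation $\Q$. Because $F$ has acyclic values and $L(F)=0$, a Lefschetz/Hopf-trace computation shows that this target subcomplex must be acyclic. This exhibits a nontrivial acyclic subpresentation of $\PP$, contradicting the hypothesis. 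Hence no such $F$ exists and $\X(K)$ has the MFPP.

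The main obstacle I anticipate is the middle step: passing rigorously from the multivalued $F$ on the face poset $\X(K)$ to a cellular/combinatorial self-map of $K_{\PP}$ whose image lands in a subcomplex \emph{generated by a subpresentation}. The acyclic-carrier and order-complex-selector results give a continuous self-map of $K'$, but I must control its image cell-by-cell and argue that acyclicity of the values of $F$ together with $L(F)=0$ forces the image into a proper acyclic subpresentation rather than some arbitrary acyclic subcomplex. Making this identification precise—linking the combinatorial support of $F$ to the $1$- and $2$-cells of the presentation, and ensuring the resulting subpresentation is both nontrivial and acyclic—is where the real work lies, and it is here that the specific two-dimensional structure of $K_{\PP}$ and the hypothesis on subpresentations will be essential.
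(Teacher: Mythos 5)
There is a genuine gap, and it sits exactly where you wave your hands. Your plan hinges on applying Proposition~\ref{induced} to replace $F$ by a continuous map $f:K'\to K'$, but that proposition requires the values of $F$ to be \emph{weakly contractible}, whereas the MFPP only gives you \emph{acyclic} values. Your parenthetical claim that ``acyclic plus the low-dimensional setting lets one arrange weak contractibility of values'' is false: in dimension $2$ acyclicity does not imply weak contractibility (the standard complex of a balanced presentation of a nontrivial perfect superperfect group, e.g.\ the binary icosahedral group, is acyclic but has nontrivial fundamental group). Upgrading ``acyclic'' to ``weakly contractible'' is precisely where the hypothesis that no nontrivial subpresentation of $\PP$ is acyclic must be used, and it is the bulk of the paper's proof: each value $F(x)$ is weakly homotopy equivalent to the subcomplex $\K(F(x))$ of $K'$, and the paper shows that \emph{every acyclic subcomplex $L$ of any triangulation of $K_{\PP}$ is contractible}. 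The argument collapses $L$ until it has no free faces, then uses a path argument inside each open $2$-cell $e_j^2$ to show that if a $2$-simplex of $L$ meets $e_j^2$ then all of $e_j^2$ lies in $L$; hence the $2$-dimensional part of $L$ is the standard complex of a subpresentation, which by hypothesis is not acyclic unless empty, forcing $L$ to be a $1$-dimensional acyclic complex, i.e.\ a tree. You never carry out this step, and in fact you invoke the subpresentation hypothesis in the wrong place.

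Your endgame is also misdirected. Once the selector $f:K'\to K'$ exists, Proposition~\ref{induced} says that a fixed point of $f$ yields a fixed point of $F$; since $|K'|$ is homeomorphic to $K_{\PP}$, which has the FPP, the map $f$ \emph{must} have a fixed point, and you are done immediately --- no contradiction setup, no Lefschetz numbers, and no Theorem~\ref{main} are needed. Your further claims do not hold up: the sentence ``$f$ cannot be homotopic to a map with nonzero Lefschetz number in the way the FPP would force'' is incoherent (FPP applies to every continuous self-map, regardless of its Lefschetz number), and the plan to force the image of $f$ into the standard complex $K_{\Q}$ of a proper subpresentation and show that subcomplex is acyclic is unjustified: the image of a continuous map need not be a subcomplex at all, let alone a union of presentation cells. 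The subpresentation hypothesis constrains acyclic \emph{subcomplexes} (the carriers of the values of $F$), not images of selectors.
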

\begin{proof}
Let~$K$ be a triangulation of~$K_{\PP}$ and let $F:\X(K)\multimap \X(K)$ be a $\susc$ multivalued map with acyclic values. Then, for each $x\in \X(K)$, the image $F(x)\subseteq \X(K)$ is acyclic and weakly homotopy equivalent to~$\K (F(x))$, which is a subcomplex of the barycentric subdivision~$K'$ of~$K$. If we show that every acyclic subcomplex of~$K'$ is contractible, then~$F(x)$ is weakly contractible for every $x\in \X(K)$ and by Proposition~\ref{induced} there is an induced map $f:K'\to K'$ which has a fixed point by hypothesis, and then~$F$ also has a fixed point.

Let~$K$ be any triangulation of~$K_{\PP}$. We will prove that every acyclic subcomplex of~$K$ is contractible (we do not need~$K$ to be a barycentric subdivision). Let~$L$ be an acyclic subcomplex of~$K$. By performing simplicial collapses we may assume~$L$ has no free faces. Let $e^2_1, e^2_2,\ldots, e^2_m$ denote the open $2$-dimensional cells of~$K_{\PP}$, where~$e^2_j$ corresponds to the relator~$r_j$. Suppose that~$\sigma$ is a $2$-dimensional simplex of~$L$ whose interior contains a point~$x$ of~$e^2_j$ for some $1\le j\le m$. Let~$\tau$ be a $2$-dimensional simplex of~$K$ whose interior contains a point~$y$ of~$e^2_j$. There exists a simple path $\gamma:[0,1]\to K$ from~$x$ to~$y$ entirely contained in~$e^2_j$ which does not pass through any vertex of~$K$. If $\tau \notin L$, define $L_c\leqslant K$ to be the subcomplex of~$K$ generated by the $2$-simplices of~$K$ which are not in~$L$. Let~$t_0$ be the minimal $t\in [0,1]$ for which $\gamma(t)\in L_c$. Then~$\gamma(t_0)$ lies in an open $1$-simplex of~$L$ which is a free face of~$L$, contradicting our assumption. This proves that all of~$e^2_j$ is contained in~$L$. Thus, if a closed $2$-simplex of~$L$ intersects the open cell~$e^2_j$, then $e^2_j\subseteq L$. We deduce that the subcomplex $L_2\leqslant L$ generated by the $2$-simplices of~$L$ is a union of closed $2$-cells~$\overline{e}^2_j$. Then~$L_2$ is the standard subcomplex of a nontrivial subpresentation of~$\PP$ or $L_2=\emptyset$. In the first case~$L_2$ is connected and not acyclic by hypothesis. Since~$L$ is obtained from~$L_2$ by adding $0$-simplices and $1$-simplices, the complex~$L$ is not acyclic either, a contradiction. Therefore, $L_2=\emptyset$, so~$L$ is a $1$-dimensional acyclic complex and hence contractible.
\end{proof}

Standard complexes of presentations with the FPP were studied by Sadofschi Costa in~\cite{SC}. This leads to the following example.

\begin{ej}
Consider the presentation $$\PP=\langle x,y | x^3, xyx^{-1}yxy^{-1}x^{-1}y^{-1}, x^{-1}y^{-4}x^{-1}y^2x^{-1}y^{-1}\rangle$$ given in~\cite[Corollary~2.7]{SC}. The standard complex~$K_{\PP}$ has the FPP. It is not rationally acyclic, since its Euler characteristic is~$\chi(K_{\PP})=2$. There are~$11$ non-trivial subpresentations $\mathcal{Q}_1, \mathcal{Q}_2, \ldots, \mathcal{Q}_{11}$. Their (integral) homology groups~$H_1(K_{Q_i};\Z)$ of degree~$1$ are $\Z_3\oplus \Z_3$, $\Z_3\oplus \Z$, $\Z \oplus \Z$, $\Z_3$ or $\Z$ in all the cases. Due to Proposition~\ref{pres}, for any triangulation~$K$ of~$K_{\PP}$ the finite space~$\X(K)$ is not rationally acyclic, but it does have the MFPP.
\end{ej}

A finite~$T_0$ space is \textit{minimal} if it has no beat points.
Every finite topological space~$X$ is homotopy equivalent to a  minimal finite space, called the \textit{core} of~$X$.
The core of~$X$ is unique up to a homeomorphism.
It is always a retract of~$X$.

\begin{obs}
If a finite~$T_0$ space~$X$ is minimal and does not have the MFPP, then any finite~$T_0$ space~$Y$ which is homotopy equivalent to~$X$ also lacks the MFPP. To show this, suppose that $F:X\multimap X$ is a fixed point free $\susc$ multivalued map with acyclic values. Then the composition $iFr:Y\multimap Y$ is fixed point free with the same properties. Here, $i:X\to Y$ and $r:Y\to X$ are continuous maps which satisfy $ri=1_X$, and~$iFr$ is the composition, defined by $iFr(y)=i(F(r(y)))$. This remark applies for instance to the space~$X$ in Example~\ref{esfera}.
\end{obs}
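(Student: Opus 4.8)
The plan is to exploit the theory of cores of finite $T_0$ spaces in order to transport a fixed-point-free multivalued map on $X$ to one on $Y$. First I would produce the two single-valued continuous maps that mediate between the spaces. Since $X$ is minimal, it is its own core, and because $Y$ is homotopy equivalent to $X$, the uniqueness of the core up to homeomorphism forces the core of $Y$ to be homeomorphic to $X$. As the core is always a retract of $Y$, I may identify $X$ with the core of $Y$, obtaining a subspace inclusion $i:X\to Y$ together with a retraction $r:Y\to X$ that satisfy $ri=1_X$; both maps are continuous, i.e.\ order-preserving.

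Next I would take a witness $F:X\multimap X$ to the failure of the MFPP, namely a $\susc$ multivalued map with acyclic values and no fixed point, and define $G=iFr:Y\multimap Y$ by $G(y)=i(F(r(y)))$. The substance of the argument is verifying that $G$ inherits all three required properties. Strong upper semicontinuity follows at once from the combinatorial characterization of Lemma~\ref{lem:sscft0s}: if $y_1\le y_2$ then $r(y_1)\le r(y_2)$ because $r$ is order-preserving, hence $F(r(y_1))\subseteq F(r(y_2))$, and applying the single-valued map $i$ yields $G(y_1)\subseteq G(y_2)$.

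For the acyclicity of the values I would use that $i$ is a subspace inclusion, hence an order embedding and a homeomorphism onto its image. Consequently $i$ restricts to a homeomorphism $F(r(y))\to i(F(r(y)))=G(y)$, so the acyclicity of $F(r(y))$ passes to $G(y)$. For fixed-point-freeness, suppose $y\in G(y)$. Then $y=i(z)$ for some $z\in F(r(y))$, so $y$ lies in the image of $i$; applying $r$ and using $ri=1_X$ gives $r(y)=r(i(z))=z$, whence $r(y)\in F(r(y))$, contradicting that $F$ has no fixed point. Thus $G$ is a $\susc$ multivalued self-map of $Y$ with acyclic values and no fixed point, so $Y$ lacks the MFPP.

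I expect the only genuinely delicate point to be the bookkeeping around $i$ and $r$: one must invoke the uniqueness of the core to know that the core of $Y$ is \emph{homeomorphic} to the given minimal space $X$, and not merely homotopy equivalent, so that $i$ is literally a subspace inclusion and the homeomorphism argument for the acyclic values applies verbatim. Once this is in place, the semicontinuity check and the fixed-point computation are purely formal manipulations using $ri=1_X$ and the order-preservation of $r$.
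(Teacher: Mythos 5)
Your proposal is correct and takes essentially the same route as the paper: the paper also obtains the pair $i:X\to Y$, $r:Y\to X$ with $ri=1_X$ from the theory of cores (the minimal space $X$ being the core of $Y$ up to homeomorphism, and the core a retract) and transports $F$ to the composition $iFr:Y\multimap Y$. The three verifications you carry out --- strong upper semicontinuity via Lemma~\ref{lem:sscft0s}, acyclicity of values via $i$ being an embedding onto its image, and fixed-point-freeness via $ri=1_X$ --- are precisely the details the paper's remark leaves implicit, and they are all sound.
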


We do not know whether the MFPP is a homotopy invariant. Other connections among fixed points, Lefschetz numbers and homotopies are discussed in Section~\ref{sechomotopy}.

\section{A different class of multivalued maps}
\label{secisotone}

During the last three sections of this paper we have focused on multivalued maps between finite~$T_0$ spaces which have acyclic values and which are either $\susc$ or $\slsc$. For such maps we could construct induced homomorphisms in homology, and prove a Lefschetz fixed point theorem. These results are inspired by the classical version of this theorem for acyclic maps of an ANR~$X$, which can be found for example in~\cite[Theorem~32.9]{Gor}. This result states that if~$X$ is a compact ANR and if $F:X\multimap X$ is an usc map with acyclic values, then the Lefschetz number $L(F)\in \Z$ is well-defined. Furthermore, if in this situation the Lefschetz number is nonzero, then the multivalued map~$F$ has a fixed point.

It is natural to wonder whether in the setting of finite~$T_0$ spaces the assumption of strong semicontinuity can be relaxed, or even just be replaced by another continuity assumption. Recall that a multivalued map $F:X\to Y$ is continuous in the sense of Michael if it is usc and lsc. Continuous maps between finite $T_0$ spaces where studied by Walker~\cite{Wal2}. He calls these maps isotone relations. There is no mention in~\cite{Wal2} of a version of the Lefschetz fixed point theorem for isotone maps between finite~$T_0$ spaces. There is, though, a characterization of those finite~$T_0$ spaces with the fixed point property with respect to isotone maps: they are exactly the contractible spaces. The class of finite spaces with the MFPP seems harder to be characterized. This class strictly contains the contractible and, moreover, the rationally acyclic spaces, and it is in turn strictly contained in the class of spaces with the FPP.

We saw earlier in this paper that if~$X$ is a finite~$T_0$ space and $F:X\multimap X$ is a $\susc$ multivalued map with acyclic values, then for every continuous selector $f : X \to X$ we must have $L(f) = L(F)$, since~$\K(f)_*$ is carried by~$\Phi_F$. If we want to define Lefschetz numbers for a bigger class of maps, it is natural to require that every continuous selector $f$ of a map $F$ in this class satisfies $L(f)=L(F)$. This property holds in the classical context of usc maps between compact ANR. Recall the notion of homotopy between acyclic maps from~\cite[Definition~32.5]{Gor}. Two multivalued usc maps $F,G : X \multimap Y$ with acyclic values are \textit{homotopic}, if there exists an usc multivalued map $H : X \times [0,1] \multimap Y$ with acyclic values such that $H(x,0) = F(x)$ and $H(x,1) = G(x)$ for every $x\in X$. In Corollary~32.7 of~\cite{Gor} it is shown that for any compact ANR~$X$, homotopic maps $F,G:X\multimap X$ of this kind have the same Lefschetz number. If~$X$ is any space and $F:X\multimap X$ is usc with acyclic values, then any continuous selector $f:X\to X$ of~$F$ is homotopic to~$F$. Indeed, define $H:X\times [0,1]\multimap Y$ by $H(x,t)=\{f(x)\}$ for $t<1$ and $H(x,t)=F(x)$ for $t=1$, for every $x\in X$. Then~$H$ has acyclic values and it is usc, since for any open subset $U\subseteq Y$ the small preimage of~$U$ is given by $H^{-1}(U) = (f^{-1}(U)\times [0,1)) \cup (F^{-1}(U)\times \{1\})$. The latter set is open since $F^{-1}(U) \subseteq f^{-1}(U)$. Thus, a selector has the same Lefschetz number as the multivalued map, also in this context.

Our next result shows that there is no way to define Lefschetz numbers of continuous multivalued maps (isotone relations) between finite~$T_0$ spaces if we want the property above to hold.

\begin{prop} \label{noway}
There is no map~$\lambda$ which assigns an integer number~$\lambda(F)$ to every continuous multivalued map with acyclic values $F:X\multimap X$ of finite~$T_0$ spaces, and which satisfies the following properties:
\begin{enumerate}
\item $\lambda(f) = L(f)$ for each single-valued map,
\item $\lambda(f) = \lambda(F)$ for each selector~$f$ of~$F$.
\end{enumerate}
\end{prop}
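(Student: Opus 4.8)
The plan is to argue by contradiction, deriving a numerical impossibility from a single well-chosen example. The crucial observation is that properties~(1) and~(2) are extremely rigid whenever a multivalued map admits several selectors. Indeed, if $F:X\multimap X$ is a continuous (isotone) multivalued map with acyclic values and $f_1,f_2:X\to X$ are two continuous selectors of~$F$, then property~(2) forces $\lambda(f_1)=\lambda(F)=\lambda(f_2)$, while property~(1) identifies each $\lambda(f_i)$ with $L(f_i)$. Hence the existence of $\lambda$ would imply $L(f_1)=L(f_2)$ for \emph{any} two continuous selectors of \emph{any} such~$F$. The whole proof therefore reduces to exhibiting one finite $T_0$ space~$X$ and one isotone multivalued map $F:X\multimap X$ with acyclic values possessing two continuous selectors of different Lefschetz numbers.

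For the space I would take the four-point finite model~$X$ of~$S^1$ from Example~\ref{ej1}, with maximal points $a,b$ and minimal points $c,d$ satisfying $c,d<a$ and $c,d<b$. On a space weakly homotopy equivalent to~$S^1$ the identity has $L(\mathrm{id})=\chi(X)=0$, while every constant map $c_{x_0}$ is a continuous selector condition away from giving $L(c_{x_0})=1$ by connectedness. So it suffices to produce an isotone~$F$ with acyclic values for which both the identity and some constant map are selectors. Requiring the identity to be a selector means $x\in F(x)$ for all~$x$, and requiring the constant map at~$a$ to be a selector means $a\in F(x)$ for all~$x$. I would then set
\[
  F(a)=\{a\},\qquad F(b)=\{a,b,c\},\qquad F(c)=\{a,c\},\qquad F(d)=\{a,c,d\},
\]
so that each value has either a maximum or a minimum and is thus contractible, in particular acyclic, and both selector conditions hold by inspection.

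It then remains to check that this~$F$ is simultaneously upper and lower semicontinuous, which I would verify through the combinatorial criteria~$(u_d)$ and~$(\ell_d)$ of Lemma~\ref{lem:scft0s}. Since the only nontrivial order relations in~$X$ are the four covering relations $c<a$, $c<b$, $d<a$, $d<b$, it suffices to test the eight resulting inequalities directly, and one finds they all hold. Once isotonicity is confirmed, properties~(1) and~(2) give $0=L(\mathrm{id})=\lambda(F)=L(c_a)=1$, the desired contradiction. I expect the genuine difficulty to lie precisely in this balancing act between acyclicity and isotonicity, rather than in any homological computation: to keep the values acyclic one is forced to adjoin a connecting point to a disconnected pair (for instance $\{a,b\}$ is not acyclic, which forces the extra minimal point $c\in F(b)$), but such a point can break one of the two semicontinuity inequalities in the opposite direction, as a naive symmetric choice like $F(d)=\{a,d\}$ paired with $F(b)=\{a,b,c\}$ indeed does. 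The design above resolves the tension by pushing the single minimal point~$c$ consistently into $F(b)$, $F(c)$ and~$F(d)$, and the main care in writing up is to confirm that this one choice satisfies all eight inequalities at once.
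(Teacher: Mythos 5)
Your proposal is correct and takes essentially the same approach as the paper: the paper also works with the four-point model of $S^1$ and derives the contradiction $0 = L(1_X) = \lambda(F) = L(f) = 1$ from an isotone map with acyclic values admitting the identity and a null-homotopic map as continuous selectors, choosing $F(a)=F(b)=\{a,b,c\}$, $F(c)=F(d)=\{a,c,d\}$ with second selector $f(a)=f(b)=a$, $f(c)=f(d)=c$. Your variant of $F$, designed so that the second selector is literally the constant map $c_a$, is a valid alternative (all eight semicontinuity checks hold), differing from the paper only in the choice of example.
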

\begin{proof}
Consider the space~$X$ depicted in Figure~\ref{example1} and define the map $F:X\multimap X$ by setting $F(a)=F(b)=\{a,b,c\}$ and $F(c)=F(d)=\{a,c,d\}$. Then one can easily verify that~$F$ is both usc and lsc, and that it has acyclic values. The identity~$1_X$ is a selector of~$F$ with Lefschetz number $L(1_X)$ equal to the Euler characteristic of~$X$, which is $0$. On the other hand, the map $f:X\to X$ given by $f(a)=f(b)=a$ and $f(c)=f(d)=c$ is another continuous selector which is null-homotopic, and therefore $L(f)=1$. Thus, we cannot have $L(1_X) = \lambda (F) = L(f)$.
\end{proof}

\section{Homotopies}
\label{sechomotopy}

In this final section we discuss homotopies between multivalued maps. For the sake of presentation, we only consider multivalued maps which are $\susc$. Nevertheless, the results can easily be  adjusted for maps which are $\slsc$.

Let~$X$ again denote a finite~$T_0$ space and let~$Y$ be any topological space. Two $\susc$ multivalued maps $F,G:X\multimap Y$ with acyclic values are said to be \textit{homotopic}, if there exists a $\susc$ multivalued map $H:X\times [0,1]\multimap Y$ with acyclic values which satisfies $H(x,0)=F(x)$ and $H(x,1)=G(x)$ for every $x\in X$.  Furthermore, for multivalued maps $F,G:X\multimap Y$ we write $F \le G$ if $F(x)\subseteq G(x)$ for every $x\in X$. Then the following result holds.

\begin{prop}[Homotopic strongly semicontinuous maps] \label{fence}
Let~$X$ and~$Y$ be two finite~$T_0$ spaces. Then two $\susc$ multivalued maps $F,G:X\multimap Y$ with acyclic values are homotopic, if and only if there exists a sequence, also called a \textit{fence}, of multivalued maps $F_i:X\multimap Y$ which are $\susc$ and have acyclic values, and which satisfy $F=F_0\le F_1\ge F_2\le \ldots F_n=G$.
\end{prop}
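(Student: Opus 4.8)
The plan is to prove the two directions separately, with the main work being the construction of a homotopy out of a fence. For the forward direction ($\Rightarrow$), I would suppose $H:X\times[0,1]\multimap Y$ is a $\susc$ homotopy with acyclic values from $F$ to $G$. The key observation is that $\susc$-ness of $H$ on the product $X\times[0,1]$, combined with Lemma~\ref{lem:sscft0s}, severely constrains how $H$ can depend on the interval coordinate. Since $[0,1]$ is connected, for any fixed $x\in X$ and any $0<t<t'<1$ the values $H(x,t)$ and $H(x,t')$ should coincide; one extracts the relevant information at finitely many ``transition'' parameter values. The idea is that as $t$ ranges over $[0,1]$, the family $\{H(\cdot,t)\}$ passes through only finitely many distinct multivalued maps $X\multimap Y$ (because both $X$ and $Y$ are finite, there are only finitely many such maps), and the monotonicity forced by $\susc$-ness means that consecutive members of this finite list are comparable under $\le$. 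Reading off these finitely many maps in the order in which they appear, and inserting suitable comparisons at the endpoints $t=0$ and $t=1$, produces the required fence $F=F_0\le F_1\ge F_2\le\cdots\le F_n=G$.

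For the converse ($\Leftarrow$), it suffices by transitivity of the homotopy relation to treat a single comparison step: given two $\susc$ maps $F,G:X\multimap Y$ with acyclic values and $F\le G$ (i.e.\ $F(x)\subseteq G(x)$ for all $x$), I would build an explicit $\susc$ homotopy with acyclic values interpolating between them. The natural candidate, mirroring the Stong-type homotopy recalled in Section~\ref{secfs}, is
\begin{displaymath}
  H(x,t) = G(x) \ \text{ for } t>0, \qquad H(x,0)=F(x),
\end{displaymath}
so that $H(\cdot,0)=F$ and $H(\cdot,t)=G$ for every $t>0$. One must verify three things: that $H$ has acyclic values (immediate, since each value is either some $F(x)$ or some $G(x)$, all acyclic by hypothesis), that $H(x,1)=G(x)$ as required, and crucially that $H$ is $\susc$ as a map on the product finite $T_0$ space $X\times[0,1]$.

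The verification of $\susc$-ness is the step I expect to be the main obstacle, and it is where the hypothesis $F\le G$ is used. By Lemma~\ref{lem:sscft0s} I need to check that $(x,s)\le(x',t)$ in the product order implies $H(x,s)\subseteq H(x',t)$. Here $[0,1]$ carries its usual order as a (non-finite) $T_0$ space, with $U_t=[0,t]$, so $\susc$-ness should be checked via Remark~\ref{susciffc}, which extends the combinatorial characterization to arbitrary posets. The product order gives $x\le x'$ and $s\le t$. If $s>0$ then $t>0$, so both values are $G$-values and $H(x,s)=G(x)\subseteq G(x')=H(x',t)$ by $\susc$-ness of $G$. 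If $s=0$, then $H(x,s)=F(x)$; if also $t=0$ we use $\susc$-ness of $F$, while if $t>0$ we use $F(x)\subseteq G(x)\subseteq G(x')$, combining $F\le G$ with $\susc$-ness of $G$. This case analysis closes the argument, and chaining such one-step homotopies along the fence (using that homotopy is an equivalence relation, in particular symmetric, so the alternating $\ge$ steps are handled identically) yields a homotopy from $F$ to $G$.
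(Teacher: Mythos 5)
Your proof of the converse direction breaks down: the one-step homotopy you construct is not $\susc$, and the method you use to verify it is invalid. In the definition of homotopy, $[0,1]$ is the genuine unit interval, so $X\times[0,1]$ is not an Alexandrov space: $[0,t]$ is \emph{not} open, there is no smallest open set containing $(x,t)$, and neither Lemma~\ref{lem:sscft0s} nor Remark~\ref{susciffc} applies to it (if you re-topologize $[0,1]$ with the down-set topology to make them apply, you have changed the statement being proved). Strong upper semicontinuity on $X\times[0,1]$ is a \emph{two-sided} local condition, not monotonicity in a product order, and your map fails it: if $F(x)\subsetneq G(x)$ for some $x$, take $B=F(x)$; the small preimage $\{(x',t)\,:\,H(x',t)\subseteq F(x)\}$ contains $(x,0)$, but every neighborhood of $(x,0)$ contains points $(x,t)$ with $t>0$, where $H(x,t)=G(x)\not\subseteq F(x)$, so this preimage is not open. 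In fact your formula \emph{reverses} the Stong homotopy recalled in Section~\ref{secfs}, which places the larger map on the closed slice: the correct interpolation for $F\le G$ is $H(x,t)=F(x)$ for $t<1$ and $H(x,1)=G(x)$ (equivalently $H(x,0)=G(x)$, $H(x,t)=F(x)$ for $t>0$, a homotopy from $G$ to $F$, which one then reverses); for that map values shrink locally and $\susc$-ness follows directly from Definition~\ref{def:ssc}.

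The forward direction also has a genuine gap. The claim that $H(x,t)=H(x,t')$ for all $0<t<t'<1$ is false: with $X$ a point and $Y=\{a<b\}$, the map given by $H(t)=\{a\}$ for $t<1/2$, $H(1/2)=\{a,b\}$, and $H(t)=\{b\}$ for $t>1/2$ is $\susc$ with acyclic values. Worse, the set of parameters $t$ at which $H(\cdot,t)$ equals a fixed multivalued map need not be a finite union of intervals (it can be a Cantor set, since its complement only needs to be open), so ``the order in which the finitely many maps appear'' is not well defined and your extraction procedure does not go through. What is true, and what the paper's proof uses, is the following: by Remark~\ref{susciffc} the homotopy $H$ is a continuous map $X\times[0,1]\to\mathcal{P}(Y)$, by Stong's exponential law it corresponds to a genuine path $[0,1]\to\mathcal{P}(Y)^X$ lying in the finite subspace of maps with acyclic values, and points in the same path component of a finite $T_0$ space are joined by a fence (as recalled in Section~\ref{secfs}). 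Your slice-wise analysis never establishes that $t\mapsto H(\cdot,t)$ is a continuous path into $\mathcal{P}(Y)^X$ with its pointwise-order topology; that exponential-law step is exactly what replaces your ``finitely many transitions'' heuristic.
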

\begin{proof}
The proof is an adaptation of Stong's arguments in~\cite[Corollary~2]{Sto} to our context. A $\susc$ multivalued map $H:X\times [0,1]\multimap Y$ with acyclic values is a continuous single-valued map $X\times [0,1]\to \mathcal{P}(Y)$ with acyclic values by Remark \ref{susciffc}. By the exponential law \cite[Lemma~1]{Sto} this corresponds to a continuous path $\gamma :[0,1]\to \mathcal{P}(Y)^X$ in which $\gamma _t :X\to \mathcal{P}(Y)$ has acyclic values for every $0\le t\le 1$. There exists one such path from~$F$ to~$G$ if and only if~$F$ and~$G$ lie in the same path-connected component of the subspace~$S$ of~$\mathcal{P}(Y)^X$ given by those maps with acyclic values. But the compact-open topology in~$\mathcal{P}(Y)^X$ corresponds to the order~$\le$ for multivalued maps defined above, and therefore path-components of~$S$ are described by fences in~$S$.
\end{proof}

\begin{coro} \label{homot}
If~$X$ is a finite~$T_0$ space and $F,G:X\multimap X$ are homotopic $\susc$ multivalued maps with acyclic values, then $F_*=G_*:H_n(X)\to H_n(X)$ for every $n\ge 0$. In particular, we have $L(F) = L(G)$.
\end{coro}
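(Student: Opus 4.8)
The plan is to reduce the statement to the monotone case and then exploit the inclusion of graphs. By Proposition~\ref{fence}, the maps $F$ and $G$ are homotopic precisely when they are connected by a fence $F = F_0 \le F_1 \ge F_2 \le \ldots F_n = G$ of $\susc$ multivalued maps with acyclic values. Since equality of induced homomorphisms is transitive, it suffices to treat a single comparable pair, that is, to prove that whenever $F, G : X \multimap X$ are $\susc$ with acyclic values and $F \le G$, we have $F_* = G_*$; the general case then follows by composing the equalities along the fence (the steps $F_i \ge F_{i+1}$ being handled by interchanging the roles of the two maps).

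So first I would assume $F \le G$, i.e. $F(x) \subseteq G(x)$ for every $x \in X$. Identifying each map with its graph inside $X \times X$ as in Lemma~\ref{lem:inducedhommap}, the condition $F \le G$ says exactly that $F \subseteq G$ as subspaces of $X \times X$, so there is a continuous inclusion $j : F \hookrightarrow G$. Writing $p_1^F, p_2^F$ and $p_1^G, p_2^G$ for the two coordinate projections of $F$ and $G$ respectively, this inclusion is compatible with the projections, namely $p_1^G \circ j = p_1^F$ and $p_2^G \circ j = p_2^F$. By Lemma~\ref{lem:inducedhommap} both $(p_1^F)_*$ and $(p_1^G)_*$ are isomorphisms, and from $(p_1^G)_* \circ j_* = (p_1^F)_*$ it follows that $j_* = ((p_1^G)_*)^{-1}(p_1^F)_*$ is itself an isomorphism. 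Now, recalling that by definition $F_* = (p_2^F)_*\,((p_1^F)_*)^{-1}$ and $G_* = (p_2^G)_*\,((p_1^G)_*)^{-1}$, I would substitute the two compatibility relations and cancel $j_*$:
\begin{displaymath}
  F_* = (p_2^F)_*\,((p_1^F)_*)^{-1} = (p_2^G)_*\, j_*\, (j_*)^{-1}\, ((p_1^G)_*)^{-1} = (p_2^G)_*\,((p_1^G)_*)^{-1} = G_* .
\end{displaymath}

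This establishes $F_* = G_* : H_n(X) \to H_n(X)$ in the comparable case, and hence, by chaining along the fence, for any pair of homotopic maps. The statement about Lefschetz numbers is then immediate from Definition~\ref{def:lefschetz}: since $F_* = G_*$ as endomorphisms of each $H_n(X)$, they induce the same endomorphism of $H_n(X)/T_n(X)$, so $\tr(F_n) = \tr(G_n)$ for all $n$ and therefore $L(F) = L(G)$. I expect no serious obstacle here; the only point requiring care is the invertibility of $j_*$, which is precisely where Lemma~\ref{lem:inducedhommap} is used, and the reduction to comparable maps, which is exactly what Proposition~\ref{fence} provides. The $\slsc$ case is entirely analogous, passing through $X^{op}$ as in the definition of $F_*$ for $\slsc$ maps.
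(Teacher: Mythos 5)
Your proof is correct, but it takes a genuinely different route from the paper's. The paper also begins by invoking Proposition~\ref{fence} to reduce to a comparable pair $F\le G$, but from there it argues at the simplicial chain level: it takes chain maps $\varphi,\gamma\colon C_*(\K(X))\to C_*(\K(X))$ carried by the acyclic carriers $\Phi_F$ and $\Phi_G$, observes that $F\le G$ makes $\varphi$ carried by $\Phi_G$ as well, concludes $\varphi_*=\gamma_*$ from the acyclic carrier theorem, and then appeals to the proof of Proposition~\ref{triangulo} to identify these with $F_*$ and $G_*$. You instead stay entirely at the space level: the inequality $F\le G$ gives an inclusion of graphs $j\colon F\hookrightarrow G$ commuting with both projections, Lemma~\ref{lem:inducedhommap} makes $(p_1^F)_*$ and $(p_1^G)_*$ invertible, and the identity $F_*=G_*$ drops out of the definition $F_*=(p_2)_*(p_1)_*^{-1}$ by a two-line cancellation. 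Your argument is more elementary in that it bypasses the carrier machinery and, notably, does not lean on the \emph{proof} (as opposed to the statement) of Proposition~\ref{triangulo}, which the paper's proof does; it also generalizes verbatim to homotopic $\susc$ maps $F,G\colon X\multimap Y$ between different finite $T_0$ spaces, a case the paper handles in a separate remark by modifying Proposition~\ref{triangulo}. What the paper's approach buys is coherence with the toolkit already assembled for the Lefschetz theorem, where the comparison of chain maps via carriers is the recurring device. One small stylistic note: the invertibility of $j_*$, which you carefully establish, is not strictly needed — substituting $j_*=((p_1^G)_*)^{-1}(p_1^F)_*$ into $(p_2^G)_*j_*((p_1^F)_*)^{-1}$ already yields $G_*$ — but your presentation via cancellation is equally valid.
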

\begin{proof}
By Proposition~\ref{fence} we may assume that $F\le G$. Let $\varphi, \gamma: C_*(\K(X))\to C_*(\K(X))$ be chain maps carried by~$\Phi_F$ and~$\Phi_G$, respectively. Since $F\le G$, the chain map~$\varphi$ is also carried by~$\Phi_G$, and therefore we have $\varphi_*=\gamma_*:H_n(X)\to H_n(X)$ for every $n$. By the proof of Proposition~\ref{triangulo} the definitions of~$F_*$ and~$G_*$ coincide.
\end{proof}

If $F,G:X\multimap Y$ are homotopic $\susc$ multivalued maps with acyclic values between not necessarily equal finite~$T_0$ spaces, then~$F$ and~$G$ induce the same homomorphisms in homology. The proof is identical to the proof of Corollary \ref{homot} using a straightforward modification of the proof of Proposition \ref{triangulo}.

\begin{ej}
If $f\ge g:X\to X$ are comparable continuous maps between finite~$T_0$ spaces, then~$f$ has a fixed point if and only if~$g$ does. The analogue for multivalued maps is not true. Let $F:X\multimap X$ be the map in Example~\ref{corona} and let $G:X\multimap X$ be the map which coincides with~$F$ in each point but~$a$, and for which $G(a)=\{a',b,b'\}$. Then both~$F$ and~$G$ are $\susc$ with acyclic values, we have $F\ge G$, and the map~$F$ has a fixed point --- but the map~$G$ is fixed point free. Of course, $L(F) = L(G)$ by the previous result, and this number therefore has to be~$0$ by the Lefschetz fixed point theorem.
\end{ej}

In the classical fixed point theory of multivalued maps many classes of maps $F:X\multimap X$ admit a homotopic single-valued continuous function $f:X\to X$. However, in the finite space setting, the map $F:X\multimap X$ in Example~\ref{ej1} is not homotopic to any single-valued map in the sense that there is no continuous map $f:X\to X$ such that $F_f:X\multimap X$ is homotopic to~$F$. In fact, if $G:X\multimap X$ is $\susc$ with acyclic values and satisfies $G \le F$, then $G(c)=\{a\}$, $G(d)=\{b\}$, $\{a,b\}\subseteq G(a) \subseteq \{a,b,c\}$ is acyclic, so $G(a)=\{a,b,c\}$ and similarly $G(b)=\{a,b,d\}$. It is also easy to prove that if $G \ge F$ then $G=F$, so by Proposition~\ref{fence}, the homotopy class of~$F$ contains exactly one map.

\end{document}